\newtheorem{theorem}{\bf Theorem}[section]
\newtheorem{lemma}[theorem]{\bf Lemma}
\newtheorem{corollary}[theorem]{\bf Corollary}
\newtheorem{definition}[theorem]{\bf Definition}
\newtheorem{remark}[theorem]{\bf Remark}
\newtheorem{proposition}[theorem]{ Proposition}
\newtheorem{example}[theorem]{\bf Example}
\newcommand{\Real}{\mathbb{R}}
\newcommand{\ones}{\mathbf{1}}
\newcommand{\eps}{\varepsilon}
\newcommand{\pmx}[1]{
  \begin{pmatrix}
    #1
  \end{pmatrix}
}
\DeclareMathOperator{\E}{E}
\DeclareMathOperator{\Prob}{P}
\DeclareMathOperator{\card}{card}
\DeclareMathOperator{\diag}{diag}
\DeclareMathOperator{\sign}{sign}
\DeclareMathOperator{\opspan}{span}
\DeclareMathOperator{\opnull}{null}
\DeclareMathOperator{\cl}{cl}
\DeclareMathOperator{\conv}{conv}
\DeclareMathOperator{\tr}{tr}
\DeclareMathOperator{\DoF}{dof}
\DeclareMathOperator{\gdof}{GDoF}
\begin{document}

\begin{frontmatter}

\title{The geometry of PLS shrinkages}
\runtitle{PLS geometry}

\begin{aug}
  \author{\fnms{Paolo} \snm{Foschi}%
    \ead[label=e1]{paolo.foschi2@unibo.it}},

  \runauthor{P. Foschi}

  \affiliation{University of Bologna}

  \address{Dept. of Statistical Sciences, Via Belle Arti, 41, 40126
    Bologna, Italy,\\ \printead{e1}}

\end{aug}


\begin{abstract}
  The geometrical structure of PLS shrinkages is here considered.
  Firstly, an explicit formula for the shrinkage vector is provided.
  In that expression, shrinkage factors are expressed a averages of a
  set of basic shrinkages that depend only on the data matrix. On the
  other hand, the weights of that average are multilinear functions of
  the observed responses. That representation allows to characterise
  the set of possible shrinkages and identify extreme situations where
  the PLS estimator has an highly nonlinear behaviour. In these
  situations, recently proposed measures for the degrees of freedom
  (DoF), that directly depend on the shrinkages, fail to provide
  reasonable values. It is also shown that the longstanding conjecture
  that the DoFs of PLS always exceeds the number PLS directions does
  not hold. 
\end{abstract}

\begin{keyword}[class=MSC]
  \kwd{62G05}
\end{keyword}

\begin{keyword}
  \kwd{partial least squares}
  \kwd{regressions}
  \kwd{shrinkage factors}
  \kwd{Krylov subspaces} 
  \kwd{Krylov methods}
  \kwd{Degrees of Freedom}
\end{keyword}

\end{frontmatter}

\section{Introduction}

In the last decades, the Partial Least Squares (PLS) methodology has
gained high popularity among data analysts and statisticians.
That approach has been applied to several statistical and data
analysis problems, ranging from univariate regressions to more complex
models involving multivariate responses, latent variables or
functional data, to cite a few. Nonetheless, even for its simplest
application, namely PLS regressions, only few fundamental results have
been obtained. For instance, apart for an asymptotic approximation
derived by means of a delta method (see \cite{Denham:97,Phatak:2002}),
the distribution or the moments of PLS estimator are still
unknown. The difficulty on tackling this task is testified by small
amount of work that has been published on the top journals of
methodological statistics
\cite{ButlerDenham:2000,ChunKeles:2010,CookHellandSu:2013,%
  DelaigleHall:2012,FrankFriedman:1993,Goutis:96,Goutis:96b,%
  KramerSugiyama:2011,NaikTsai:2000,ReissOgden:2007}. It worth adding
that, a part of this set of these papers deals with extensions of the PLS
approach, for instance to infinite dimensional spaces, instead of
investigating the mathematical and inferential structure
\cite{DelaigleHall:2012,Goutis:96b,ReissOgden:2007}.

In parallel to statisticians, the numerical linear algebra community
have studied the same kind of tools calling them Krylov or Conjugate
Gradient methods (see \cite{LiesenStrakos:book}). Their main concern
was the design of computationally efficient algorithms and the study
of numerical stability properties, which often are not too good for
these methods. Only about a decade ago, these two research streams
have been bridged (see \cite{Elden:2004,PhatakDeHoog:2002}). The work
here proposed takes inspiration from a paper which, firstly in that
community, recognised that the data matrix does not completely defines
the properties of the estimator, the final word is left to the
observation vector \cite{Greenbaum:96}.
Interestingly, despite the deep technical knowledge of these family of
methods, to the knowledge of the author, the numerical linear algebra
community have not yet recognised the key role played by the so called
shrinkage factors
\cite{ButlerDenham:2000,FrankFriedman:1993,Goutis:96,%
  Helland:1988,LingjaerdeChristophersen:2000,Kramer:2007}.

The analysis proposed in this paper grounds on the results available
on the PLS shrinkages and tries to make a step forward in the
understanding of the inner structure of the PLS regression
estimators. Firstly, a novel expression for the vector of shrinkages
which is explicit in terms of the observations is derived.  A similar
expression was proposed in \cite{ButlerDenham:2000}, but only for a
couple of special cases, and an analogous formula, but involving Ritz
values, was derived in \cite{LingjaerdeChristophersen:2000}. The
latter, however, is not fully explicit, being these Ritz values only
implicitly defined in terms of the observation vector.

By means of that expression the geometry of PLS shrinkage is formally
characterised. The range of all possible values for the shrinkage
vector is provided. This analysis encompass the one presented in
\cite{ButlerDenham:2000} where shrinkage and expansion patterns were
considered. It allows also to complete the work of Lingj{\ae}rde and
Christophersen in \cite{LingjaerdeChristophersen:2000}, where extreme
expansion or shrinkage behaviours are studied. In their concluding
table, Lingj{\ae}rde and Christophersen were not able to establish
bounds for one of the four extreme cases considered.
Sometimes, it is also presumed that shrinkage factors cannot be
negative For instance, this eventuality was not taken into account in
the analysis of Butler and Denham (see \cite{ButlerDenham:2000} page
588), even though it was previously considered in
\cite{FrankFriedman:1993} (see also \cite{Kramer:2007}). The same
oversight was made in the conclusions of
\cite{LingjaerdeChristophersen:2000}, where the authors presumed that
it was sufficient to bound shrinkages to one to avoid a ``harmful''
expansion. Here, it is shown, by an example, that large expansions
along principal directions arise even when the observation vector is
not orthogonal to those directions.

The possibility to have very large expansions, which is essentially
due the highly nonlinear nature of the estimator, has serious
consequences on the so-called Generalised Degrees of Freedom (GDoF), a
statistic proposed as an extension of the DoF concept to nonlinear
estimators \cite{Ye:93,Efron:2004}. That statistic was later applied
to PLS regressions in \cite{KramerSugiyama:2011}, where a conjecture,
originally formulated in \cite{FrankFriedman:1993,MartensNaes:book}
that states that the DoF of PLS are always larger than the number of
PLS directions, is supported. That statement was formally proven for
the single direction case and experimentally verified for the general
case. According to Kramer and Sugiyama, their tests ``confirmed'' that
conjecture \cite{KramerSugiyama:2011}.  It is worth noting that that
their experiments failed to support that conjecture for large models,
but the authors ascribed these negative perfomances to numerical
rounding errors.  Alternative measures for the DoF of PLS parameter
estimators have also been proposed in
\cite{Denham:97,Phatak:2002,VanDerVoet:1999}. These statistics,
however, seems to have even worse performances that the GDoF one.

Before concluding this short literature review, it should also
mentioned that an interesting alternative approach for the analysis of
PLS is proposed in \cite{Druilhet2008}, where the shrinkage
properties are studied along directions that differ from the
principal ones.

\medskip

This paper is structured as follows. Firstly, in the next section, the
PLS regression estimator is formulated as restricted least squares
estimator on a Krylov supspace. After a rotation on the principal
axes, the Krylov matrix is factorised as a diagonal by Vandermonde
matrix product.  This decomposition allows to separate the effects of
the response vector from those of the singular values of the data
matrix. In that section the main results are presented.  In
particular, a novel explicit expression where the shrinkage vector is
characterised as an average of a set of ``extreme'' shrinkages that do
not depend on the observations is provided. This expression allows to
geometrically characterise the set of possible shrinkages in terms of
these extreme points.  The third section contains formal proofs of
these results and a precise description and derivation of that
geometrical structure.  Finally the fourth section contains some
examples and a discussion on the obtained results. These examples will
show that an odd behaviour can be expected from PLS even in non
extreme setups. Furthermore, a sufficient condition and
counterexamples that invalidates the above mentioned conjecture on the
DoF of PLS regression are presented.

\section{Shrinkages for PLS regressions}
\label{sec:main}

Consider the estimation by means of Partial Least Squares regressions
of the following linear model
\begin{align*}
  \tilde{y} &= X \tilde\beta + \varepsilon, 
  &
  \varepsilon &\sim (0, \sigma^2 I).
\end{align*}
where $X \in \Real^{N \times m}$ is the regressor matrix,
$\tilde{y} \in \Real^{N}$ is the response vector 
and $\varepsilon$ the disturbance vector \cite{Goutis:96}.
Without loss of generality, the study of the partial least squares
(PLS) regression can be performed by considering its projection on the
principal axis of the regression matrix 
\cite{ButlerDenham:2000,Goutis:96,LingjaerdeChristophersen:2000}.
After a rotation, the normal equations associated to the above
regression model can be rewritten as
\begin{align}\label{eq:model}
  y &= \Lambda \beta + u,
  &
  u &\sim (0,\sigma^2 \Lambda),
\end{align}
where $\Lambda$ comes from the singular value decomposition
$X=U\Lambda^{\frac12}V^T$, $y = V^T X^T \tilde{y}$,
$\beta = V^T \tilde\beta$ and $u = V^T X^T \varepsilon$.  Here,
without loss of generality, the eigenvalues $\lambda_i$
($i=1,\ldots,m$) are assumed to be strictly positive, distinct and in
decreasing order: $\lambda_1 > \lambda_2 > \cdots > \lambda_m >0$.
Hereafter, a vector of all ones is denoted by $\ones$ and variables
represented by capital letters denote diagonal matrices generated from
vectors indicated by the corresponding lower-case variables, that is
$Y = \diag(y)$, $Z = \diag(z)$, $\Lambda = \diag(\lambda)$,
$\Psi = \diag(\psi)$, $\Omega = \diag(\omega)$ and so on. 

The PLS estimator with $n < m$ directions is given by
\begin{align*}
  \hat{\beta} &= K (K^T \Lambda K)^{-1} K^T y,
\end{align*}
where $K$ is the $m \times n$ Krylov matrix
\begin{align*}
  K &= \pmx{ y & \Lambda y & \cdots & \Lambda^{n-1} y},
\end{align*}
and it is assumed that $K^T \Lambda K$ is non-singular
\cite{ButlerDenham:2000,Helland:1988,Helland:1990,LingjaerdeChristophersen:2000}.
The PLS prediction and the associated residuals for $y$ are given by
$\hat{y} = P y$ and $r = (I-P)y$ with $P$ denoting the oblique
projection $P = \Lambda K (K^T \Lambda K)^{-1}K^T$.

It is convenient to factorise the Krylov matrix $K$ as
\begin{align}\label{eq:BK}
  K &= Y V,
\end{align}
where $V$ is the $m \times n$ Vandermonde matrix associated to
$\lambda$ given by
$V = \big( \ones \;\; \Lambda \ones \;\; \cdots \;\; \Lambda^{n-1} \ones \big)$.
Then, the projection matrix $P$ can be rewritten as
\begin{align*}
  P &= Y \Lambda V (V^T Y^2 \Lambda V)^{-1} V^T Y.
\end{align*}
Given the above assumptions on $\lambda$, that expression is well
posed, that is $K^T\Lambda K = V^T Y^2 \Lambda V$ is non-singular,
whenever $y$ has $n$ or more non-zero elements.

Shrinkage factors have been used in
\cite{ButlerDenham:2000,LingjaerdeChristophersen:2000} to study the
characteristics of PLS regression parameter estimates. Shrinkages are
defined as the ratios of the PLS estimated coefficients over the OLS
coefficients along principal axes. Since the OLS estimator of the
$i$-th coefficient is given by $y_i/\lambda_i$, the $i$-th shrinkage
is given by $\omega_i = \lambda_i \hat\beta_i/y_i$ and the vector of
shrinkages can be written as
\begin{align}\label{eq:shrinkDef}
  \omega &= Q\ones,  
           &
  Q &= \Lambda V (V^T \Psi \Lambda V)^{-1} V^T \Psi,
\end{align}
where $\Psi = Y^2$.  Here, $Q$ is the oblique projection on the range
of $\Lambda V$ along the null-space of $\Psi V$. Note that, defining
the shrinkages directly by \eqref{eq:shrinkDef} is more robust as it
allows for zero elements in the response vector $y$. Again, for $Q$ to
be well defined, $y$ needs to have at least $n$ non-zero elements.

A couple of properties can be immediately drawn from
\eqref{eq:shrinkDef}. Firstly, the shrinkage vector $\omega$ is
invariant to rescaling of the observation vector $y$ and secondly, it
does not depend on the signs the elements of $y$. Moreover, the
shrinkage vector $\omega$ belongs to the $n$-dimensional linear
manifold spanned by the columns of $\Lambda V$.

\subsection{Main results}

The task of obtaining simple and explicit expressions for the elements
of the projection matrices $Q$ and $P$ and of the shrinkages $\omega$
is rather difficult. However, these expressions can be obtained in
some special cases, for instance when the cardinality of $y$ is
exactly $n$, the number of PLS directions.  The following results
characterise the shrinkages in that case and in the general case.

Firstly, it is convenient to introduce some additional notation. The
set of all subsets of $S$ with cardinality $n$ is denoted by
$\binom{S}{n}$, the set of the first $m$ integers is denoted by
$[m] = \{1,2,\ldots,m\}$ and $[m,n]$ denotes the set of $n$-subsets of
$[m]$, that is $[m,n]= \binom{[m]}{n}$. For a set of indices
$\tau \in [m,n]$ and $x \in \Real^m$, $x^\tau$ and $x_\tau$ denote,
respectively, the monomial $x^\tau = \prod_{i\in\tau} x_i$ and the
subvector of $x$ obtained by selecting the elements in the positions
indicated by $\tau$. The $m \times n$ selection matrix associated to
that subsetting operation will be denoted by $S_\tau$:
$x_\tau = S_\tau^T x$. Moreover, the sets of non-negative and of
positive reals will be denoted by $\Real_+$ and $\Real_{++}$.

\begin{lemma}\label{thm:shrinkSel}
  If $y = S_\tau y_\tau$ for some $\tau \subset [m,n]$ and
  $y_k \neq 0$ for all $k\in \tau$ then
  \begin{align}\label{eq:shrinkSel0}
    \omega &= \Lambda V (S_\tau^T \Lambda V)^{-1} \ones,    
    \intertext{and}
    \label{eq:shrinkSel}
    \omega_i &= 
    1 - 
    \prod_{j \in \tau} \bigg( 1- \frac{\lambda_i}{\lambda_j} \bigg),
    &
    i &= 1,\ldots,m.
  \end{align}
  \begin{proof}
    Equation \eqref{eq:shrinkSel0} follows from the fact that
    $Y = S_\tau S_\tau^T Y$ and that $S_\tau^T V$ is
    non-singular. Then, $S_\tau^T \omega = \ones$, that is
    $\omega_j = 1$ when $j \in \tau$.  Now, from
    \eqref{eq:shrinkSel0},
    $1 - \omega_i = 1 - \sum_{k=1}^{n} \lambda_i^k \alpha_k$, where
    $\alpha_1,\ldots,\alpha_n$ are the elements of
    $\alpha = (S_\tau^T \Lambda V)^{-1} \ones$. That is,
    $1- \omega_i = p(\lambda_i)$ is the value of a polynomial $p$
    evaluated at $\lambda_i$.  That polynomial has degree $n+1$ and
    zeros at the points $\lambda_j$, $j \in \tau$ and value $1$ when
    evaluate at $0$, that is $p(0)=1$ and $p(\lambda_j) = 0$, for
    $j\in \tau$. The expression in \eqref{eq:shrinkSel} is a
    representation of that polynomial.
  \end{proof}
\end{lemma}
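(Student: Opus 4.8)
The plan is to exploit the fact that the hypothesis $y = S_\tau y_\tau$ forces the diagonal matrices $Y$ and $\Psi = Y^2$ to be supported entirely on the index set $\tau$, which should collapse the $m\times n$ projection $Q$ onto a genuinely $n\times n$ problem governed by the square Vandermonde block $V_\tau := S_\tau^T V$. Since the $\lambda_i$ are distinct, $V_\tau$ is a nonsingular Vandermonde matrix, and this invertibility is what makes the whole reduction go through. I would prove \eqref{eq:shrinkSel0} by a direct substitution-and-cancellation computation, and then read off \eqref{eq:shrinkSel} from it by a short polynomial-interpolation argument.

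For \eqref{eq:shrinkSel0}, first I would record the support factorization $\Psi = S_\tau \Psi_\tau S_\tau^T$ with $\Psi_\tau := S_\tau^T\Psi S_\tau$ invertible (here $y_k\ne 0$ on $\tau$ is essential), together with the identity $S_\tau^T \Lambda V = \Lambda_\tau V_\tau$ where $\Lambda_\tau := S_\tau^T\Lambda S_\tau$. Substituting these into $Q = \Lambda V (V^T\Psi\Lambda V)^{-1}V^T\Psi$ turns the Gram matrix into $V^T\Psi\Lambda V = V_\tau^T \Psi_\tau\Lambda_\tau V_\tau$, a product of nonsingular $n\times n$ factors, so its inverse splits as $V_\tau^{-1}\Lambda_\tau^{-1}\Psi_\tau^{-1}V_\tau^{-T}$. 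Because $V^T\Psi = V_\tau^T\Psi_\tau S_\tau^T$, the $\Psi_\tau$ and $V_\tau^{-T}/V_\tau^T$ pairs cancel, leaving $Q = \Lambda V (\Lambda_\tau V_\tau)^{-1}S_\tau^T = \Lambda V(S_\tau^T\Lambda V)^{-1}S_\tau^T$. Applying this to $\ones$ and using $S_\tau^T\ones = \ones$ yields \eqref{eq:shrinkSel0}.

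To obtain \eqref{eq:shrinkSel}, write $\omega = \Lambda V\alpha$ with $\alpha := (S_\tau^T\Lambda V)^{-1}\ones$. Since the $i$-th row of $\Lambda V$ is $(\lambda_i, \lambda_i^2, \ldots, \lambda_i^n)$, this gives $\omega_i = \sum_{k=1}^n \alpha_k\lambda_i^k$, so $1-\omega_i = p(\lambda_i)$ for the polynomial $p(x) = 1-\sum_{k=1}^n\alpha_k x^k$, which has degree at most $n$ and satisfies $p(0)=1$. The crucial observation is that \eqref{eq:shrinkSel0} already gives $S_\tau^T\omega = \ones$, i.e.\ $\omega_j = 1$ for every $j\in\tau$; hence $p$ vanishes at the $n$ distinct points $\{\lambda_j : j\in\tau\}$. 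A polynomial of degree $\le n$ with $n$ distinct roots and nonzero value at $0$ cannot be degenerate, so it has degree exactly $n$ and is determined up to scale by its roots; normalizing by $p(0)=1$ forces $p(x) = \prod_{j\in\tau}(1-x/\lambda_j)$. Evaluating at $\lambda_i$ then gives $\omega_i = 1 - \prod_{j\in\tau}(1-\lambda_i/\lambda_j)$.

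The main obstacle I anticipate is purely bookkeeping in the first step: one must verify that all the cancellations in $Q$ are legitimate, which rests entirely on the nonsingularity of the three $n\times n$ blocks $V_\tau$, $\Lambda_\tau$, $\Psi_\tau$, and in particular that the restriction hypothesis lets the $n\times m$ object $V^T\Psi$ be rewritten as $V_\tau^T\Psi_\tau S_\tau^T$ without loss. Once the reduction to $V_\tau$ is in hand, the interpolation argument for \eqref{eq:shrinkSel} is routine, the only delicate point being the degree count that rules out a lower-degree $p$ carrying spurious extra roots.
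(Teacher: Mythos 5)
Your proposal is correct and follows essentially the same route as the paper: first exploit the support factorization $\Psi = S_\tau \Psi_\tau S_\tau^T$ (the paper writes this as $Y = S_\tau S_\tau^T Y$) together with the nonsingularity of $S_\tau^T V$ to collapse $Q$ to $\Lambda V(S_\tau^T \Lambda V)^{-1}S_\tau^T$, then identify $1-\omega_i = p(\lambda_i)$ with $p(0)=1$ and roots $\{\lambda_j : j\in\tau\}$ via interpolation. The only differences are that you spell out the cancellations the paper leaves implicit, and you correctly state that $p$ has degree exactly $n$, where the paper's ``degree $n+1$'' is a slip (the polynomial $1-\sum_{k=1}^n \alpha_k x^k$ has $n+1$ coefficients but degree at most $n$).
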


Note that, the above Lemma states that when the cardinality of $y$ is
exactly $n$, the shrinkage vector $w$ depends only on the sparsity
pattern of $y$ and not on the actual values of its elements. Then, the
following definition is well posed.
\begin{definition}\label{thm:defOmegaTau}
  For $\tau \in [m,n]$, $\omega_{(\tau)}$ denotes the vector
  of shrinkages corresponding to $y = S_\tau \ones$:
  \begin{align*}
    \omega_{(\tau)} = \Lambda V (S_\tau^T \Lambda V)^{-1} \ones.
  \end{align*}
\end{definition}

The following theorem states that any shrinkage vector is an average
of the degenerate shrinkage vectors $\omega_{(\tau)}$,
$\tau \in [m,n]$. It provides a novel representation, explicit on $y$,
for the shrinkage vector. The proof will be given in the next section.
\begin{theorem}\label{thm:omegaAvg}
  If the cardinality of $\psi = Y^2\ones$ is at least $n$, then
  \begin{align*}
    \omega = 
    \Big(    \sum_{\tau \in [m,n]} \psi^\tau \pi_\tau \Big)^{-1}
    \sum_{\tau \in [m,n]} \psi^\tau \pi_\tau \omega_{(\tau)},
  \end{align*}
  where
  \begin{align}\label{eq:pitau}
    \pi_\tau = 
    \lambda^\tau  
    \prod_{ \{j<i\} \subseteq \tau } (\lambda_i - \lambda_j)^2.
  \end{align}
\end{theorem}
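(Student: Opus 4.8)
The plan is to reduce everything to the Cauchy--Binet formula together with Cramer's rule, exploiting the Vandermonde structure of $V$. First I would dispose of the normalising denominator. Writing $M = V^T\Psi\Lambda V$ and applying Cauchy--Binet to the product of the $n\times m$ matrix $V^T$ and the $m\times n$ matrix $\Psi\Lambda V$ gives
\begin{align*}
  \det M = \sum_{\tau\in[m,n]} \det(S_\tau^T V)\,\det(S_\tau^T\Psi\Lambda V).
\end{align*}
Since $\Psi$ and $\Lambda$ are diagonal, $S_\tau^T\Psi\Lambda V = \Psi_\tau\Lambda_\tau S_\tau^T V$ (with $\Psi_\tau,\Lambda_\tau$ the diagonal matrices of the subvectors), so $\det(S_\tau^T\Psi\Lambda V) = \psi^\tau\lambda^\tau\det(S_\tau^T V)$, and $S_\tau^T V$ is the $n\times n$ Vandermonde on the nodes $\{\lambda_i : i\in\tau\}$, whose squared determinant is $\prod_{\{j<i\}\subseteq\tau}(\lambda_i-\lambda_j)^2$. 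Hence each summand equals $\psi^\tau\lambda^\tau\prod_{\{j<i\}\subseteq\tau}(\lambda_i-\lambda_j)^2 = \psi^\tau\pi_\tau$, and the denominator in the statement is exactly $\det M = \sum_\tau\psi^\tau\pi_\tau$.

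Next I would reduce the vector identity to an identity between the $n$-dimensional coefficient vectors. Both $\omega = \Lambda V\,(M^{-1}c)$, with $c = V^T\Psi\ones$, and $\omega_{(\tau)} = \Lambda V\,(S_\tau^T\Lambda V)^{-1}\ones$ are of the form $\Lambda V$ times an $n$-vector; since $\Lambda$ is invertible and $V$ is a Vandermonde with distinct nodes, $\Lambda V$ has full column rank and may be cancelled on the left. It therefore suffices to prove
\begin{align*}
  (\det M)\,M^{-1}c = \sum_{\tau\in[m,n]} \psi^\tau\pi_\tau\,(S_\tau^T\Lambda V)^{-1}\ones ,
\end{align*}
and I would attack the $k$-th component of the left-hand side with Cramer's rule: $(\det M)(M^{-1}c)_k = \det M_k$, where $M_k$ is $M$ with its $k$-th column replaced by $c$.

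The crux is to expand $\det M_k$ so that the degenerate pieces $(S_\tau^T\Lambda V)^{-1}\ones$ reappear, and the key manoeuvre is to write the right-hand vector $c$ in the same Cauchy--Binet--ready form as the columns of $M$. Setting $\Phi = \Psi\Lambda$ one has $M = V^T\Phi V$ and, because $\Psi\ones = \Phi\Lambda^{-1}\ones$, also $c = V^T\Phi(\Lambda^{-1}\ones)$; hence $M_k = V^T\Phi\tilde V_k$, where $\tilde V_k$ is $V$ with its $k$-th column replaced by $\Lambda^{-1}\ones$. Applying Cauchy--Binet again and using $\phi^\tau = \psi^\tau\lambda^\tau$ gives $\det M_k = \sum_\tau\psi^\tau\lambda^\tau\det(S_\tau^T V)\,\det(S_\tau^T\tilde V_k)$. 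A final, inner application of Cramer's rule to $S_\tau^T V$ identifies $\det(S_\tau^T\tilde V_k) = \det(S_\tau^T V)\,[(S_\tau^T V)^{-1}\Lambda_\tau^{-1}\ones]_k$, and since $(S_\tau^T V)^{-1}\Lambda_\tau^{-1}\ones = (S_\tau^T\Lambda V)^{-1}\ones$, every summand collapses to $\psi^\tau\pi_\tau\,[(S_\tau^T\Lambda V)^{-1}\ones]_k$, which is precisely the reduced identity. Restoring the factor $\Lambda V$ and dividing by $\det M$ yields the theorem. I expect the main obstacle to be exactly this rewriting $c = V^T\Phi(\Lambda^{-1}\ones)$: it is what makes numerator and denominator expand over the same index set $[m,n]$ with matching Vandermonde minors, and it is reassuring that all Vandermonde determinants enter squared, so the sign ambiguities in the ordering of $\tau$ cancel automatically.
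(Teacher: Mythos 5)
Your proof is correct and follows essentially the same route as the paper's: Cramer's rule applied to the coefficient vector $(V^T\Psi\Lambda V)^{-1}V^T\Psi\ones$, a Cauchy--Binet expansion of the resulting determinants over $\tau \in [m,n]$, and evaluation of the Vandermonde minors to produce the weights $\psi^\tau\pi_\tau$. The only differences are cosmetic: you realise the numerator via the column replacement $c = V^T\Phi\Lambda^{-1}\ones$ with $\Phi=\Psi\Lambda$ and identify $(S_\tau^T\Lambda V)^{-1}\ones$ by an inner application of Cramer's rule, whereas the paper deletes a column of the extended Vandermonde matrix $(\ones \;\; \Lambda V)$ (absorbing a sign $(-1)^k$) and recognises the corner vectors $\alpha_{(\tau)}$ by specialising the general formula to $\card(\psi)=n$.
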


That is, $\omega$ is an average (a convex combination) of the extreme
points $\omega_{(\tau)}$. The weights of that average, which are given
by $\psi^\tau \pi_\tau$, are multilinear functions of the squared
observations $\psi_1,\ldots,\psi_m$.  An analogous expression is
already known, but only for the case $n=m-1$ \cite{ButlerDenham:2000}.

Belonging $\omega$ to the convex hull of the set
$\{ \omega_{(\tau)} \;|\; \tau \in [m,n] \}$, extreme shrinkages will
arise when $y$ has (almost) cardinality $n$.  Furthermore, although
$\omega$ is a convex combination of the $\omega_{(\tau)}$s, the
mapping $y \to \omega$ does not range over the whole convex
hull. Indeed, as it will be shown later in Section~\ref{sec:geom}, the
range of $\omega$ is polyhedral (an union of simplicia) and not
necessarily convex.

Now, the set of possible shrinking/expanding patterns that can arise
in PLS regression is characterised.  Leaving $y$ unconstrained, that
set depends only on $m$, the number of eigenvalues and not on their
actual values.
\begin{theorem}\label{thm:shrinkPatterns}
  The following relations hold for $y$ and $\omega$:
  \begin{description}
  \item[a)] $\omega_m<1$ and the number of sign changes in $\omega -
    \ones$ is exactly $n$;
  \item[b)] for any signature with $n$ sign changes ending with a
     negative value, there's a value of $y$ such that $\omega - \ones$
    has that signature.
  \end{description}
\end{theorem}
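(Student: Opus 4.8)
The plan is to recast both statements in terms of the univariate \emph{residual polynomial} attached to $\omega$, and then to read them off from the theory of orthogonal polynomials with respect to a discrete measure. Since $\hat\beta = K(K^T\Lambda K)^{-1}K^Ty$ lies in the range of $K=YV$, we may write $\hat\beta = YVa$ for some $a\in\Real^n$, so that $\hat\beta_i = y_i\,g(\lambda_i)$ with $g$ a polynomial of degree at most $n-1$. Hence $\omega_i = \lambda_i\hat\beta_i/y_i = \lambda_i g(\lambda_i)$ and
\[
  1-\omega_i = q(\lambda_i),\qquad q(\lambda) := 1-\lambda\,g(\lambda),
\]
a polynomial with $q(0)=1$ and $\deg q\le n$. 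The residual is $r=(I-P)y$, so $r_i = y_i(1-\omega_i) = y_i\,q(\lambda_i)$, while $K^Tr=0$ (immediate from $P=\Lambda K(K^T\Lambda K)^{-1}K^T$) gives $\sum_i\psi_i\lambda_i^k q(\lambda_i)=0$ for $k=0,\dots,n-1$. Thus $q$ is orthogonal to every polynomial of degree $<n$ with respect to the positive discrete measure $\mu=\sum_i\psi_i\,\delta_{\lambda_i}$, whose number of atoms equals the cardinality $N$ of $y$. Throughout I assume $N\ge n+1$ (the case $N=n$ being the degenerate situation of Lemma~\ref{thm:shrinkSel}); then $\deg q=n$ exactly, in agreement with the averaged expression of Theorem~\ref{thm:omegaAvg}.

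For part a), $\omega-\ones$ and $q$ have opposite signs at each $\lambda_i$, so the claims amount to $q(\lambda_m)>0$ together with \emph{exactly} $n$ sign changes of $q$ along $\lambda_1>\cdots>\lambda_m$. The lower bound is the classical Chebyshev-system argument: if $q$ changed sign $\ell<n$ times on the atoms of $\mu$, I would place a point $t_r$ in each offending gap and test $q$ against $s(\lambda)=\prod_{r=1}^{\ell}(\lambda-t_r)$, of degree $\ell<n$; then $q(\lambda_i)s(\lambda_i)$ keeps a constant sign over the atoms and is not identically zero (a degree-$n$ polynomial cannot vanish at all $N>n$ atoms), so $\sum_i\psi_i q(\lambda_i)s(\lambda_i)\neq0$, contradicting orthogonality. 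Hence $\ell\ge n$; and since $\deg q=n$ allows at most $n$ real roots, $q$ has at most $n$ sign changes along any increasing sequence. Refining from the atoms of $\mu$ to the full list $\lambda_1,\dots,\lambda_m$ cannot decrease the count, so it equals $n$ as well. These $n$ sign changes come from $n$ simple roots, all strictly inside the convex hull of the atoms, hence above $\lambda_m$; with $q(0)=1>0$ and no root in $[0,\lambda_m]$ this forces $q(\lambda_m)>0$, i.e.\ $\omega_m<1$. Pinning the count to exactly $n$, rather than merely bounding it, is the delicate point here, and it is precisely the combination of the test-polynomial inequality with the degree bound that achieves this.

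For part b), a signature with $n$ sign changes ending in a negative value is determined by the choice of the $n$ gaps $(\lambda_{i_k+1},\lambda_{i_k})$, $i_1<\cdots<i_n$, across which $\omega-\ones$ flips sign; realising it means producing a $q$ whose $n$ roots sit one in each prescribed gap. I would support $y$ on the $n+1$ indices $S=\{i_1,\,i_1{+}1,\,i_2{+}1,\dots,i_n{+}1\}$, chosen so that the $k$-th bracket between consecutive atoms of $S$ contains the target gap $(\lambda_{i_k+1},\lambda_{i_k})$. Fixing $q(\lambda)=\prod_{k=1}^{n}(\lambda-\rho_k)$ with $\rho_k$ placed in that gap, the required masses solve $\sum_{l}\psi_{s_l}\lambda_{s_l}^{\,j}q(\lambda_{s_l})=0$, $j=0,\dots,n-1$, and are given up to scale by $\psi_{s_l}\propto w_{s_l}/q(\lambda_{s_l})$, where $w$ spans the kernel of the Vandermonde on $S$, with $w_{s_l}\propto\prod_{k\neq l}(\lambda_{s_l}-\lambda_{s_k})^{-1}$. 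The key point is that $\sign(w_{s_l})=(-1)^l$ and, because each bracket of $S$ carries exactly one root, $\sign(q(\lambda_{s_l}))=(-1)^l$ as well; the two alternations cancel, so all $\psi_{s_l}$ share one sign and may be taken positive. This $y$ then has $q$ as its residual polynomial, with roots exactly in the prescribed gaps and nowhere else, so $\omega-\ones$ changes sign precisely across those gaps. Its last entry is negative (the residual polynomial, normalised to $1$ at the origin, is positive at $\lambda_m$ since all roots exceed $\lambda_m$), and a signature is uniquely fixed by its sign-change positions together with its terminal sign; hence the realised signature is exactly the target one. The main obstacle in this part is guaranteeing nonnegative masses: everything hinges on the matching of the two sign alternations, which is what dictates the particular bracketing choice of the support set $S$.
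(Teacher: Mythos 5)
Your proof is correct, but it follows a genuinely different route from the paper's. The paper derives part a) from its geometric machinery: the convex-combination representation of Theorem~\ref{thm:omegaAvg} together with the explicit vertex signs \eqref{eq:zSel} fixes the endpoint signs (Proposition~\ref{thm:tetraSign}), while the sign-change count is obtained from total positivity — $v_M(z)\leq n$ because $z=\ones-\Lambda V\alpha$ with $(\ones\;\;\Lambda V)$ totally positive, and $v_m(z)\geq n-1$ because $V^T\Psi z=0$ with $V$ totally positive (Lemma~\ref{thm:VPos}, Theorem~\ref{thm:TPchanges}, assembled in Lemma~\ref{thm:zsignature} and its corollary, where a parity argument at the two endpoints pins the count to exactly $n$); part b) then comes from the description of $\cl(\mathcal{I}_z)$ as a union of simplices $\mathcal{T}_T$ (Lemmata~\ref{thm:domEqConv} and~\ref{thm:reduce}) whose interiors realise all admissible signatures, equivalently from the inverse-mapping construction of a nonnegative $\psi$ supported on $n+1$ indices solving $V^TZS_\tau\tilde\psi=0$ with sign-consistent sparsity. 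You instead work with the residual polynomial $q$ orthogonal to degree $<n$ with respect to the discrete measure $\sum_i\psi_i\delta_{\lambda_i}$ — the Ritz-value viewpoint of Lingj{\ae}rde and Christophersen, which the paper deliberately sets aside — proving a) by the classical Chebyshev-system test-polynomial argument plus the degree bound, and b) by an explicit divided-difference construction of the masses, $\psi_{s_l}\propto w_{s_l}/q(\lambda_{s_l})$ with $w_l=\prod_{k\neq l}(\lambda_{s_l}-\lambda_{s_k})^{-1}$ spanning the Vandermonde kernel. Both arguments are sound; in fact your part b) is, at bottom, the same alternation-cancellation mechanism as the paper's inverse-mapping subsection (your $\psi$ is exactly a positive solution of $V^TZS_\tau\tilde\psi=0$ for the prescribed signature), made fully explicit via Lagrange weights. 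Your route is more elementary and self-contained for this one theorem — it avoids total positivity entirely and is constructive — whereas the paper's heavier apparatus simultaneously delivers the averaged representation of Theorem~\ref{thm:omegaAvg}, the polyhedral image and the inverse cone, of which Theorem~\ref{thm:shrinkPatterns} is then a by-product. Two cosmetic points to tighten: your $q(\lambda)=\prod_k(\lambda-\rho_k)$ should be read modulo the normalisation $q(0)=1$ (for $n$ odd the monic version has $q(0)<0$; your closing remark that the signature is fixed by the change positions plus the terminal sign already absorbs this), and, as in the paper's corollary to Lemma~\ref{thm:zsignature} which assumes $\card(z)=m$, the claim ``exactly $n$'' needs the zero-entry convention — your refinement step handles this correctly since each sign-change gap between consecutive atoms carries exactly one simple root.
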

The necessary part of Theorem~\ref{thm:shrinkPatterns} (point a) have
been proven in \cite{ButlerDenham:2000}. To the author's best
knowledge, the sufficient part (that is point b) is a new result.
As an example Table~\ref{tab:signs} reports the list of possible
signatures of $\omega - \ones$ for the case $m=6$ and $n=3$. A
positive sign indicates an expansion of the corresponding
coefficient. A negative one corresponds to a shrinkage of the
coefficient or a change in its sign, which may even be an expansion in
absolute value.
\begin{table}[ht]
  \caption{%
    Shrinkage patterns when $m=6$ and $n=3$. Positive 
    signs indicate expansions of the corresponding coefficient. 
  }
  \label{tab:signs}
  \footnotesize
  \begin{align*}
  \begin{array}{cc}
    \hline
    \sign(\omega - \ones) & \text{Positions of sign changes} \\
    \hline
    +-+--- & 1,2,3 \\
    +-++-- & 1,2,4 \\
    +-+++- & 1,2,5 \\
    +--+-- & 1,3,4 \\
    +--++- & 1,3,5 \\
    +---+- & 1,4,5 \\
    ++-+-- & 2,3,4 \\
    ++-++- & 2,3,5 \\
    ++--+- & 2,4,5 \\
    +++-+- & 3,4,5 \\
    \hline
  \end{array}    
  \end{align*}

\end{table}

\section{The geometry of PLS shrinkages}
\label{sec:geom}

To study the structure of the shrinkages it is convenient to work with
the quantity $z = \ones - \omega$. That vector contains the relative
residuals of PLS estimator, indeed
$z = Y^{-1} (y - \Lambda \hat{\beta})$, provided that $y$ does not
have null elements.
As it has already been noted on $\omega$, $z$ is a function of the
squared observations $\Psi = Y^2$ and it does not depend on the signs
of the elements of $y$. The object of this section is the study of the
mapping
\begin{align*}
  z : \mathcal{D} &\to \mathcal{I}_z ,
  &
  \psi &\to (I - Q(\psi)) \ones,
\end{align*}
where $Q(\psi) = \Lambda V (V^T \Psi \Lambda V)^{-1} V^T \Psi$.  and
$\mathcal{D}$ is the subset of $\Real_+^m$ whose elements have
cardinality not smaller than $n$.

Firstly, $\mathcal{I}_z$, the image of $z$, is a subset of the affine
space
\begin{align}\label{eq:Adef}
  \mathcal{A} = \ones +  \opspan(\Lambda V).
\end{align}
Next, $\omega = Q\ones$ is the projection on $\opspan(\Lambda V)$ of
$\ones$ along the null space of $\Psi V$. It turns out that $z$ lies
in the intersection of the null space of $\Psi V$ with the affine
space $\mathcal{A}$.  A sketch of that geometry is shown in
Figure~\ref{fig:1}.
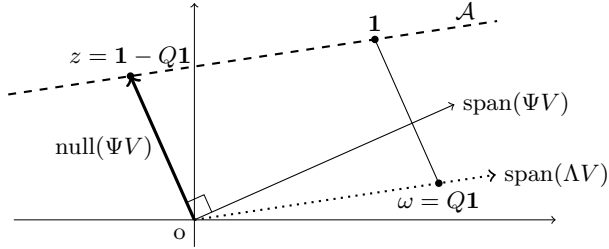
\begin{figure}[!h]
  \centering
  \begin{tikzpicture}[scale=2.4,every text node part/.style={font=\footnotesize}]
    \coordinate [label=-135:o] (o) at (0,0);
    \coordinate [label=above:$\ones$] (ones) at (1,1);
    \coordinate [label=right:$\opspan(\Lambda V)$] (u) at (1.67,.25);
    \coordinate [label=right:$\opspan(\Psi V)$] (v) at (1.44,.64);
    \coordinate (sa) at ($ (o)!3pt!(v) $);
    \coordinate (sb) at ($ (sa)!3pt!90:(v) $);
    \coordinate (sc) at ($ (o)!3pt!90:(v) $);
    \coordinate [label=below:{$\omega = Q\ones$}] (Qp) at (1.354,0.203);
    \coordinate [label=above:{$z = \ones - Q\ones$}] (z) at (-.354,.797);

    \path (o) edge[thick,dotted,->] (u);
    \path (o) edge[->] (v);
    \path (ones) edge[] (Qp);
    \path (o) edge[very thick,->] (z);
    \draw[thick,dashed] (-1.031,.695) -- (1.677,1.102);
    \draw[very thin] (sa) -- (sb) -- (sc);
    \draw[very thin,->] (-1,0) -- (2,0);
    \draw[very thin,->] (0,-.15) -- (0,1.2);

    \node at (-0.5,0.40) {$\opnull(\Psi V)$};
    \node at (1.50,1.16) {$\mathcal{A}$};

    \node [fill=black,shape=circle,inner sep=1pt] at (ones) {};
    \node [fill=black,shape=circle,inner sep=1pt] at (Qp) {};
    \node [fill=black,shape=circle,inner sep=1pt] at (z) {};
  \end{tikzpicture}

  \caption{\footnotesize \label{fig:1} Geometry of the oblique
    projection $Q(\psi)$. The dotted and dashed lines represent,
    respectively, $\opspan(\Lambda V)$ and
    $\ones + \opspan(\Lambda V)$. The thick segment corresponds to the
    vector $z$ and the other lines to $\opspan(\Psi V)$ and 
    $\ones + \opnull(\Psi V)$.}
\end{figure}

Then, the couple $(z,\psi)$ can be defined as any solution of the set
of constraints
\begin{align*}
  V^T \Psi z &= 0,
  &
  \psi &\in \mathcal{D},
  &
  z &\in \mathcal{A}.
\end{align*}

\begin{remark}\label{thm:zInverse}
  The mapping $z$ is not bijective, indeed its inverse maps
  $z \in \mathcal{I}_z$ to the set
  \begin{align*}
    \{ \psi \in \mathcal{D} \subseteq \Real_+^m \;|\; V^T Z \psi = 0 \}.
  \end{align*}
  Note that the closure of that set is the convex cone
  $\mathcal{C}_z = \{\psi \in \Real_+^m, \; V^T Z \psi=0 \}$ and,
  thus, can be characterised by means of a finite number of extremal
  rays.  This representation can exploited to derive the distribution
  density of $z$, or equivalently of $\omega$, which derives by
  integrating the density of $\psi$ over that set.
\end{remark}

Another quantity that will be used in this analysis is
$\alpha(\psi) = (V^T \Psi \Lambda V)^{-1} V^T \psi$. As it should have
been clear, the vectors $\alpha$, $\omega$ and $z$ are now considered
as functions of $\psi$. The mappings they define have the same domain,
images that will be denoted by $\mathcal{I}_\alpha$,
$\mathcal{I}_\omega$ and $\mathcal{I}_z$, respectively.  Note that,
since $\omega(\psi)$, $z(\psi)$ and $\alpha(\psi)$ are just a linear
or affine transformations of each other, most of the results derived
for any of these quantities apply to the other with only trivial
modifications. Working with $\alpha$ will correspond to working with
coordinates on the affine space $\mathcal{A}$, while working with $z$
has the advantage that its signature will have a precise geometric
meaning.

\subsection{The shrinkage vector is an average}

Consider now the behaviour of $\alpha$ and $z$ on the edge of
$\mathcal{D}$ where the cardinality is exactly $n$.
Analogously to Lemma~\ref{thm:shrinkSel}, when
$\psi = S_\tau \tilde{\psi}$, with $\tau \in [m,n]$ and
$\tilde\psi \in \Real^n_{++}$,
\begin{align}
  \label{eq:alphacorner}
  \alpha(\psi)  &= (S_\tau^T \Lambda V)^{-1} \ones,
  \intertext{and}
  \label{eq:zcorner}
  z(\psi)  &=  (I - \Lambda V (S_\tau^T \Lambda V)^{-1} S_\tau^T )\ones.
\end{align}
From Lemma~\ref{thm:shrinkSel} it follows that
$S_\tau^T \omega(\psi) = 1$ and $S_\tau^T z(\psi) = 0$.  As already
remarked, in these cases the value of $\alpha$ and $z$ depends only on
the sparsity pattern of $\psi$. The mappings
$\alpha_{(\cdot)}: [m,n] \to \Real^n$ and
$z_{(\cdot)}:[m,n] \to \Real^n$ are defined accordingly to
Definition~\ref{thm:defOmegaTau}. That is, for instance,
\begin{align}\label{eq:alphaTau}
  \alpha_{(\cdot)}:
  \tau &\to 
  \alpha_{(\tau)} = \alpha( S_\tau\ones ) =(S_\tau^T \Lambda V)^{-1}\ones.
\end{align}

The set of vectors $\{ \alpha_{(\tau)}, \; \tau \in [m,n]\}$ plays a
key role in the study of the image of $\alpha$. Indeed,
Theorem~\ref{thm:omegaAvg} is a corollary of the following result.
\begin{theorem}\label{thm:alphaAvg}
  For $\psi \in \mathcal{D}$,
  \begin{align*}
    \alpha(\psi) &= 
    \sum_{\tau \in [m,n]} p_\tau(\psi) \alpha_{(\tau)},
  \end{align*}
  where
  $p_\tau(\psi) = 
  \Big(\sum_{s \in [m,n]} \psi^s \pi_s \Big)^{-1} \psi^\tau \pi_\tau$,
  and $\pi_\tau$ is defined in \eqref{eq:pitau}.

  \begin{proof}
    Firstly note that $\alpha = \alpha(\psi)$ solves the equation
    \begin{align}\label{eq:alphaSys}
      V^T \Psi \Lambda V \alpha = V^T \Psi \ones.
    \end{align}
    The cardinality condition on $\psi$ is sufficient to guarantee the
    non-singularity of the coefficient matrix of \eqref{eq:alphaSys}.
    By the Kramer rule, the $k$-th element of $\alpha$ is given by
    \begin{align}\label{eq:alphaExpr}
      \alpha_k = 
      (-1)^k \frac{
        \det( V^T \Psi W_{(-k)} )
      }{
        \det( V^T \Psi \Lambda V )
      },
    \end{align}
    where $W_{(-k)}$ is the matrix obtained from the $m \times (n+1)$
    Vandermonde matrix $\big(\ones \; \Lambda V \big)$ after deleting the
    $(k+1)$-th column:
    \begin{align*}
      W_{(-k)} &= 
      \pmx{ 
        \ones & \Lambda \ones & \cdots & \Lambda^{k-1}\ones &
        \Lambda^{k+1}\ones & \cdots & \Lambda^n \ones 
      }.
    \end{align*}
    That is, $W_{(-k)}$ contains all the powers of $\lambda$ up to $n$
    excluding the $k$-th one.
    Note that $W_{(-0)} = \Lambda V$ so that 
    $\alpha_k = (-1)^k 
    \det( V^T \Psi W_{(-k)} ) / \det( V^T \Psi W_{(-0)} )$.
    
    Now, the Cauchy-Binet formula allows to express the determinants
    in \eqref{eq:alphaExpr} as
    \begin{align*}
      \det( V^T \Psi W_{(-k)} )
      &= 
      \sum_{\tau \in [m,n]} 
      \psi^\tau \det(S_{\tau}^TV) \det(S_\tau^T W_{(-k)}).
    \end{align*}
    Then, setting 
    $\pi_\tau = \det(S_\tau^TV) \det(S_\tau^T W_{(-0)})$
    gives
    \begin{align}\label{eq:alphak0}
      \alpha_k &= 
      \frac{1}{
        \displaystyle\sum_{\tau \in [m,n] } \psi^\tau \pi_\tau 
      }
      \sum_{\tau \in [m,n] } \psi^\tau \pi_\tau
      \alpha_{(\tau),k}, 
      &
      \text{with}&&
      \alpha_{(\tau),k} &= (-1)^k
      \frac{
        \det(S_\tau^T W_{(-k)})
      }{
        \det(S_\tau^T W_{(-0)})
      }.
    \end{align}
    The expression \eqref{eq:pitau} for $\pi_\tau$ derives by noting
    that, since $S_\tau^T V$ is a square Vandermonde matrix,
    \begin{align*}
      \det(S_\tau^TV) &= 
      \prod_{\{j<i\} \subseteq \tau}  (\lambda_i - \lambda_j),
      \intertext{and}
      \det(S_\tau^T W_{(-0)}) &= 
      \det(S_\tau^T \Lambda V) = \lambda^\tau \det(S_\tau^TV).
    \end{align*}
    The proof is concluded by noting that when $\psi$ has cardinality
    $n$, the summations in \eqref{eq:alphak0} have only one non-zero
    term and, thus, $\alpha_{(\tau),k}$ defined in \eqref{eq:alphak0}
    is equal to the $k$-th element of the vector $\alpha_{(\tau)}$
    defined in \eqref{eq:alphaTau}. Indeed,
  \end{proof}
\end{theorem}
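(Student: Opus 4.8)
The plan is to turn the statement into a pair of determinant identities and then to apply the Cauchy–Binet formula. First I would note that, by its definition $\alpha = (V^T\Psi\Lambda V)^{-1}V^T\psi$, the vector $\alpha(\psi)$ is the solution of the $n\times n$ linear system $V^T\Psi\Lambda V\,\alpha = V^T\Psi\ones$, using $V^T\psi = V^T\Psi\ones$. Since $\psi\in\mathcal{D}$ has cardinality at least $n$, the coefficient matrix is nonsingular, so Cramer's rule applies component-wise.

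The key structural observation is that both the numerator and the denominator determinants produced by Cramer's rule have the common shape $\det(V^T\Psi W)$, where $W$ is an $m\times n$ matrix whose columns are drawn from the powers $\ones,\Lambda\ones,\ldots,\Lambda^n\ones$ of the full degree-$(n+1)$ Vandermonde system. Indeed, the columns of $\Lambda V$ are precisely $\Lambda^1\ones,\ldots,\Lambda^n\ones$, and replacing the $k$-th of these by the right-hand side $\ones=\Lambda^0\ones$ produces, after reordering the columns (which contributes the overall sign $(-1)^k$), the matrix $W_{(-k)}$ that omits the $k$-th power. This places $\alpha_k$ in the closed form \eqref{eq:alphaExpr}.

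Next I would apply Cauchy–Binet to each such determinant, reading $V^T\Psi$ as an $n\times m$ matrix and $W_{(-k)}$ as $m\times n$. This expands $\det(V^T\Psi W_{(-k)})$ into a sum over $n$-subsets $\tau\in[m,n]$ of products of complementary minors. The minor coming from $V^T\Psi$ factors as $\det(S_\tau^T V)\,\psi^\tau$, because selecting columns $\tau$ of $V^T\Psi$ extracts the diagonal scalars $\psi_i$, $i\in\tau$, and leaves $\det(S_\tau^T V)$; the minor coming from $W_{(-k)}$ is $\det(S_\tau^T W_{(-k)})$. Setting $\pi_\tau=\det(S_\tau^T V)\det(S_\tau^T\Lambda V)$ and collecting the common factor $\psi^\tau$ gives the ratio \eqref{eq:alphak0} with the weights $p_\tau(\psi)$ exactly as claimed. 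The explicit form \eqref{eq:pitau} of $\pi_\tau$ then follows from the square-Vandermonde determinant $\det(S_\tau^T V)=\prod_{\{j<i\}\subseteq\tau}(\lambda_i-\lambda_j)$ together with $\det(S_\tau^T\Lambda V)=\lambda^\tau\det(S_\tau^T V)$.

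The one point that requires care — and the main obstacle — is verifying that the symbol $\alpha_{(\tau),k}$ generated by the Cauchy–Binet bookkeeping in \eqref{eq:alphak0} really coincides with the $k$-th entry of the corner vector $\alpha_{(\tau)}=(S_\tau^T\Lambda V)^{-1}\ones$ of \eqref{eq:alphaTau}. I would settle this by a second application of Cramer's rule, now to the restricted $n\times n$ system $S_\tau^T\Lambda V\,\alpha_{(\tau)}=\ones$: its $k$-th component equals $(-1)^k\det(S_\tau^T W_{(-k)})/\det(S_\tau^T\Lambda V)$ by the very same column-replacement-and-reordering argument used in the first step, which is precisely the expression for $\alpha_{(\tau),k}$. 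Equivalently, this is the check that the full sum degenerates to a single term when $\psi=S_\tau\ones$ has cardinality $n$, recovering the corner value; the only real work is tracking the reordering signs consistently across the two uses of Cramer's rule.
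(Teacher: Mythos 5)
Your proposal is correct and follows essentially the same route as the paper: Cramer's rule on the system $V^T\Psi\Lambda V\,\alpha = V^T\Psi\ones$, column replacement and reordering to produce the matrices $W_{(-k)}$, Cauchy--Binet expansion over $\tau\in[m,n]$ with the minor factorisation $\psi^\tau\det(S_\tau^TV)$, and the square-Vandermonde evaluation of $\pi_\tau$. Your final identification of $\alpha_{(\tau),k}$ via a second application of Cramer's rule to the restricted system $S_\tau^T\Lambda V\,\alpha_{(\tau)}=\ones$ is in fact slightly more explicit than the paper's argument (which observes that the sum collapses to a single term when $\card(\psi)=n$ and then trails off), so no gap remains.
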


\begin{corollary}\label{thm:avg}
  If $\psi \in \mathcal{D}$ then
  \begin{align*}
    \omega(\psi) &= 
    \sum_{\tau \in [m,n]} p_\tau(\psi) \omega_{(\tau)}
    &&\text{and}&
    z(\psi) &= 
    \sum_{\tau \in [m,n]} p_\tau(\psi) z_{(\tau)}.
  \end{align*}
  \begin{proof}
    The corollary follows directly noting that
    $\omega(\psi) = \Lambda V \alpha(\psi)$ and
    $z(\psi) = \ones - \Lambda V \alpha(\psi)$.
  \end{proof}
\end{corollary}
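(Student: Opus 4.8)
The plan is to obtain the corollary as a purely formal consequence of Theorem~\ref{thm:alphaAvg}, using that both $\omega$ and $z$ are affine images of $\alpha$ under a map that depends neither on $\tau$ nor on $\psi$. First I would record the link between $\omega$ and $\alpha$. Since $\Psi = \diag(\psi)$ gives $\Psi\ones = \psi$, the shrinkage vector factors as
\[
  \omega(\psi) = Q\ones
  = \Lambda V\,(V^T\Psi\Lambda V)^{-1} V^T\Psi\ones
  = \Lambda V\,(V^T\Psi\Lambda V)^{-1} V^T\psi
  = \Lambda V\,\alpha(\psi).
\]
The same computation carried out at a corner $\psi = S_\tau\ones$, together with \eqref{eq:alphaTau} and Definition~\ref{thm:defOmegaTau}, yields $\omega_{(\tau)} = \Lambda V\,\alpha_{(\tau)}$.

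With these two identities in hand the first assertion is immediate. The matrix $\Lambda V$ is fixed (it depends only on the eigenvalues), so it passes through the finite sum and I can apply it to the expansion of Theorem~\ref{thm:alphaAvg}:
\[
  \omega(\psi) = \Lambda V\,\alpha(\psi)
  = \Lambda V \sum_{\tau\in[m,n]} p_\tau(\psi)\,\alpha_{(\tau)}
  = \sum_{\tau\in[m,n]} p_\tau(\psi)\,\Lambda V\alpha_{(\tau)}
  = \sum_{\tau\in[m,n]} p_\tau(\psi)\,\omega_{(\tau)}.
\]

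For the statement on $z$ the one extra ingredient I need is that the coefficients form a partition of unity. This is read off directly from $p_\tau(\psi) = \big(\sum_{s\in[m,n]}\psi^s\pi_s\big)^{-1}\psi^\tau\pi_\tau$, which gives $\sum_{\tau\in[m,n]} p_\tau(\psi) = 1$, so that $\sum_\tau p_\tau(\psi)\ones = \ones$. Using $z = \ones - \omega$ together with $z_{(\tau)} = \ones - \omega_{(\tau)}$ (which follows from \eqref{eq:zcorner} and $S_\tau^T\ones = \ones$), I would then write
\[
  z(\psi) = \ones - \omega(\psi)
  = \sum_{\tau\in[m,n]} p_\tau(\psi)\,\ones - \sum_{\tau\in[m,n]} p_\tau(\psi)\,\omega_{(\tau)}
  = \sum_{\tau\in[m,n]} p_\tau(\psi)\,z_{(\tau)}.
\]

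I do not expect any genuine obstacle: once the affine dictionary between $\alpha$, $\omega$ and $z$ is set up, everything reduces to linearity of the map $\Lambda V(\cdot)$ and to the normalisation $\sum_\tau p_\tau = 1$. The only point deserving a moment's care is precisely that normalisation, without which the constant term $\ones$ would not reproduce and the combination would fail to be affine. As a closing remark I would note that each $p_\tau(\psi)\ge 0$, since $\pi_\tau = \lambda^\tau\prod_{\{j<i\}\subseteq\tau}(\lambda_i-\lambda_j)^2\ge 0$ and $\psi^\tau\ge 0$, so that these expansions are in fact convex combinations, which is what justifies describing $\omega(\psi)$ as an average of the extreme shrinkages $\omega_{(\tau)}$.
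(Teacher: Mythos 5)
Your proof is correct and takes essentially the same route as the paper, which likewise deduces the corollary from Theorem~\ref{thm:alphaAvg} via the identities $\omega(\psi) = \Lambda V\,\alpha(\psi)$ and $z(\psi) = \ones - \Lambda V\,\alpha(\psi)$. The only difference is that you spell out the normalisation $\sum_{\tau \in [m,n]} p_\tau(\psi) = 1$ needed for the constant term $\ones$ to reproduce, a point the paper's one-line proof leaves implicit.
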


Since the object of the analysis will mainly be the relative residual
vector $z$ it is convenient, for future reference, to give the
explicit expression for $z_{(\tau),i}$ by rewriting
\eqref{eq:shrinkSel} as
\begin{align}\label{eq:zSel}
  z_{(\tau),i} &= \prod_{j \in \tau} 
  \Big(
  1 - \frac{\lambda_i}{\lambda_j}
  \Big),
  &
  \tau &\in [m,n].
\end{align}

The marginal dependence of $z$ on $\psi_k$ is characterised in
the following corollary.
\begin{corollary}\label{thm:alphaPsik}
  For $k \in [m]$, $z$ can be written in terms of $\psi_k$ as follows
  \begin{align*}
    z &= t \, z|_{\psi_k=0} + (1-t) (I - \lambda_k^{-1}\Lambda) z|^{(n-1)}_{\psi=\theta},
    &
      t &= \frac{1}{1+\psi_k g_k}, 
  \end{align*}
  with
  \begin{align*}
    g_k 
    &= 
      \Big(
      \sum\limits_{\tau \in \binom{[m]-\{k\}}{n}} 
      \psi^{\tau} \pi_\tau
      \Big)^{-1}
      \Big(
      \sum\limits_{\tau \in \binom{[m]-\{k\}}{n-1}}
      \psi^{\tau} \pi_{\tau \cup \{k\}}
      \Big),
  \end{align*}
  and where $z|_{\psi_k=0}$ is the values of $z$ obtained by setting
  $\psi_k=0$ and $z|_{\psi=\theta}^{(n-1)}$ is the value of $z$
  obtained at the previous step of the PLS method evaluated at the
  point $\theta = (I - \lambda_k^{-1}\Lambda)^2 \psi$. Note that
  $\theta_k = 0$.
  \begin{proof}
    From \eqref{eq:alphak0}, rewrite $\alpha$ as
    \begin{align*}
      z
      = 
      \frac{A + C \psi_k}{B + D \psi_k}
      = 
      \frac{A}{B} \cdot \frac{B}{B+D\psi_k}
      +
      \frac{C}{D} \cdot \frac{D\psi_k}{B+ D\psi_k},
    \end{align*}
    where
    \begin{align*}
    A 
    &= 
      \sum\limits_{\tau \in \binom{[m]-\{k\}}{n}} 
      \psi^\tau \pi_\tau z_{(\tau)},
      & 
        C
        &=
          \sum\limits_{\tau \in \binom{[m]-\{k\}}{n-1}} 
          \psi^\tau \pi_{\tau \cup \{k\}} z_{(\tau \cup \{k\})},
    \\
    B
    &= 
      \sum\limits_{\tau \in \binom{[m]-\{k\}}{n}} 
      \psi^\tau \pi_\tau,
      &
        D 
        &=
      \sum\limits_{\tau \in \binom{[m]-\{k\}}{n-1}} 
      \psi^\tau \pi_{\tau \cup \{k\}}.
  \end{align*}
  Now, write $\pi_{\tau \cup \{k\}}$ and $z_{(\tau \cup \{k\})}$ can be
  written in terms of $\pi_{\tau}$ and $z_{(\tau)}^{(n-1)}$ as follows
  \begin{align*}
    \pi_{\tau \cup \{k\}}
    &= 
      \prod_{j \in \tau \cup \{k\}} \lambda_j
      \prod_{ \{j<i\} \in \tau \cup \{k\}} (\lambda_i - \lambda_j)^2
      = 
      \lambda_k
      \prod_{j \in \tau} (\lambda_k - \lambda_j)^2
      \pi_{\tau}^{(n-1)}
  \end{align*}
  and  %
  $
  z_{(\tau \cup \{k\}),p}
  = 
  \prod_{j \in \tau \cup \{k\}} 
  ( 1 - \lambda_j^{-1}\lambda_p )
  = 
  ( 1 - \lambda_k^{-1}\lambda_p )
  z_{(\tau),p}^{(n-1)}
  $,
  that is,
  $z_{(\tau \cup \{k\})} = ( I  - \lambda_k^{-1}\Lambda ) z_{(\tau)}^{(n-1)}$.
  Here $z_{(\tau)}^{(n-1)}$ is the value of $z_{(\tau)}$ at the step
  $n-1$.  It follows that
  \begin{align*}
    \frac{C}{D} 
    &=
      (I - \lambda_k^{-1}\Lambda)
      \frac{
      \displaystyle%
      \sum\limits_{\tau \in \binom{[m]-\{k\}}{n-1}} 
      \psi^\tau \pi_{\tau}^{(n-1)} 
      z_{(\tau)}^{(n-1)} 
      \prod_{j\in \tau} (1 - \lambda_k^{-1}\lambda_j)^2
      }{
      \displaystyle\sum\limits_{\tau \in \binom{[m]-\{k\}}{n-1}} 
      \psi^\tau \pi_{\tau}^{(n-1)}
      \prod_{j\in \tau} (1 - \lambda_k^{-1}\lambda_j)^2
      }
    \\
    &=
      (I - \lambda_k^{-1}\Lambda)
      \frac{
      \displaystyle%
      \sum\limits_{\tau \in \binom{[m]-\{k\}}{n-1}} 
      \theta^\tau \pi_{\tau}^{(n-1)} 
      z_{(\tau)}^{(n-1)} 
      }{
      \displaystyle\sum\limits_{\tau \in \binom{[m]-\{k\}}{n-1}} 
      \theta^\tau \pi_{\tau}^{(n-1)}
      },
  \end{align*}
  where $\theta_j = (1 - \lambda_k^{-1}\lambda_j)^2 \psi_j$,
  $j=1,\ldots,m$.  The vector $\theta$ can also be written as
  $\theta = (I - \lambda_k^{-1}\Lambda)^2\psi$ and then
  $C/D = (I - \lambda_k^{-1}\Lambda) z|^{(n-1)}_{\psi=\theta}$.
\end{proof}
\end{corollary}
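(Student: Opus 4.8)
The plan is to start from the average representation of Corollary~\ref{thm:avg}, namely
\[
  z(\psi) = \frac{\sum_{\tau \in [m,n]} \psi^\tau \pi_\tau\, z_{(\tau)}}{\sum_{\tau \in [m,n]} \psi^\tau \pi_\tau},
\]
and to isolate the dependence on the single coordinate $\psi_k$. Since every subset $\tau \in [m,n]$ either avoids $k$ or is of the form $\sigma \cup \{k\}$ with $\sigma \in \binom{[m]-\{k\}}{n-1}$, I would split both sums accordingly. For the $k$-containing terms one has $\psi^{\sigma \cup \{k\}} = \psi_k\, \psi^\sigma$, so the numerator takes the form $A + \psi_k C$ and the denominator the form $B + \psi_k D$, with $A,B$ collecting the $k$-free subsets and $C,D$ the $k$-containing ones. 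A purely algebraic manipulation then rewrites the ratio as the combination
\[
  z = \frac{B}{B+\psi_k D}\cdot\frac{A}{B} + \frac{\psi_k D}{B+\psi_k D}\cdot\frac{C}{D},
\]
which already produces the announced coefficient $t = (1+\psi_k g_k)^{-1}$ upon setting $g_k = D/B$, matching the stated expression.

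Two identifications then remain. The first is immediate: setting $\psi_k=0$ annihilates exactly the terms $C$ and $D$, so $A/B = z|_{\psi_k=0}$. The second, and the genuine core of the argument, is to show that $C/D = (I-\lambda_k^{-1}\Lambda)\,z|^{(n-1)}_{\psi=\theta}$. To this end I would exploit the explicit product formulas \eqref{eq:pitau} and \eqref{eq:zSel} to peel off the index $k$, obtaining $\pi_{\sigma\cup\{k\}} = \lambda_k \prod_{j\in\sigma}(\lambda_k-\lambda_j)^2\,\pi^{(n-1)}_\sigma$ and, componentwise, $z_{(\sigma\cup\{k\})} = (I-\lambda_k^{-1}\Lambda)\,z^{(n-1)}_{(\sigma)}$, where the superscript $(n-1)$ denotes the corresponding quantities for the $(n-1)$-direction method. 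Both factorisations follow by separating, in the respective products indexed by $\tau=\sigma\cup\{k\}$, the single factor $\lambda_k$ and the pairs $(\lambda_k-\lambda_j)$ that involve $k$.

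Substituting these into $C$ and $D$, the common scalar $\lambda_k$ cancels between the two, while the $\sigma$-independent matrix $(I-\lambda_k^{-1}\Lambda)$ factors out of the numerator. The remaining $\lambda$-dependent weights $\prod_{j\in\sigma}(\lambda_k-\lambda_j)^2$ are then absorbed by reweighting the observations: writing $\theta_j = (1-\lambda_k^{-1}\lambda_j)^2\psi_j$ gives $\theta^\sigma = \psi^\sigma\prod_{j\in\sigma}(1-\lambda_k^{-1}\lambda_j)^2$, the excess power $\lambda_k^{2(n-1)}$ cancelling between numerator and denominator. Hence $C/D$ becomes $(I-\lambda_k^{-1}\Lambda)$ times the step-$(n-1)$ average of the $z^{(n-1)}_{(\sigma)}$ with weights $\theta^\sigma\pi^{(n-1)}_\sigma$, which by Corollary~\ref{thm:avg} applied at level $n-1$ is precisely $z|^{(n-1)}_{\psi=\theta}$. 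I expect the main obstacle to be bookkeeping rather than conceptual: one must check that restricting the step-$(n-1)$ sum to subsets $\sigma$ with $k\notin\sigma$ is harmless, which holds because $\theta_k = (1-\lambda_k^{-1}\lambda_k)^2\psi_k = 0$ forces every $\sigma$ containing $k$ to contribute zero, so the reduced sum coincides with the full step-$(n-1)$ representation over $\binom{[m]}{n-1}$. Combining the two identifications yields the claimed decomposition.
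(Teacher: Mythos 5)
Your proposal is correct and follows essentially the same route as the paper's own proof: the same split of the sums in Corollary~\ref{thm:avg} into $k$-free and $k$-containing subsets, the same factorisations $\pi_{\sigma\cup\{k\}} = \lambda_k \prod_{j\in\sigma}(\lambda_k-\lambda_j)^2\,\pi^{(n-1)}_\sigma$ and $z_{(\sigma\cup\{k\})} = (I-\lambda_k^{-1}\Lambda)\,z^{(n-1)}_{(\sigma)}$, and the same reweighting $\theta = (I-\lambda_k^{-1}\Lambda)^2\psi$. If anything, you are slightly more careful than the paper in explicitly verifying that restricting the step-$(n-1)$ sum to subsets avoiding $k$ is harmless because $\theta_k=0$, a point the paper only notes in passing.
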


Corollary~\ref{thm:alphaPsik} shows that, by varying the value of
$\psi_k$, the vector $z$ moves along the segment with extremes
$z|_{\psi_k=0}$ and
$(I - \lambda_k^{-1}\Lambda)z|^{(n-1)}_{\psi=\theta}$. Those are
reached when $\psi_k =0$ or $\psi_k=\infty$, respectively.  
That corollary gives also a representation of the shrinkages obtained
at the $(n-1)$-th step as a limit of the ones obtained at the $n$-th
step.  That is, when $\psi_k$ is large, the shrinkage factors can be
approximated rescaling those obtained from $(n-1)$-steps of the PLS
procedure applied to model where $\psi$ is replaced by $\theta$:
$z = (I - \lambda_k^{-1}\Lambda)z|_{\psi=\theta}^{(n-1)}$ for large
values of $\psi_k$.
The above result allows also to derive the shrinkage or the estimator
distribution conditional on
$\psi_1,\ldots,\psi_{k-1}, \psi_{k+1},\ldots,\psi_m$.

\subsection{The shrinkage's range}

Consider now the characterisation of $\mathcal{I}_\alpha$, that is the
range of the mapping $\alpha$.  Theorem~\ref{thm:alphaAvg} states that
$\mathcal{I}_\alpha 
\subseteq \conv( \{\alpha_{\tau} \;|\; \tau \in [m,n] \} )$,
where $\conv(X)$ denotes the convex hull of the set $X$.  
In order to derive the actual shape of $\mathcal{I}_\alpha$ the two
auxiliary results are introduced.  The first one is given in the
following lemma which states that the above inclusion becomes an
equality, when $m = n+1$.
\begin{lemma}\label{thm:domEqConv}
  When $m=n+1$, 
  $\cl(\mathcal{I}_\alpha) =
  \conv( \{\alpha_{\tau}, \; \tau \in [m,n] \} )$,
  where $\cl(X)$ denotes the closure of the set $X$.

  \begin{proof}
    It need only to be proven that any element of the righthand side
    can be written as $\alpha(\psi)$ for some $\psi \in \Real_+^m$.
    To this end, firstly note that 
    \begin{align*}
      [m,n] = 
      \big\{ \tau_i = [m] - \{i\}, \; i = 1,\ldots,m \big\}.
    \end{align*}
    Then, consider a generic $\alpha_*$ in the convex hull of 
    $\{\alpha_{(\tau_1)}, \ldots, \alpha_{(\tau_m)}\}$:
    \begin{align*}
      \alpha_* &= \sum_{i=1}^m c_i \alpha_{(\tau_i)},
    \end{align*}
    where $c_i>0$ and $\sum_i c_i = 1$.  By Theorem~\ref{thm:alphaAvg},
    it is sufficient to find a $\psi \in \Real_+^m$ such that $c_i =
    p_{\tau_i}$. To this end, note that, since $\psi^{\tau_i} =
    \tau_i^{-1} (\tau_1 \cdots \tau_m)$,
    \begin{align*}
      p_{\tau_i} &= 
      \frac{
        \psi_i^{-1} \pi_{\tau_i}
      }{
        \sum_{j=1}^m\psi_j^{-1} \pi_{\tau_j}
      }.
    \end{align*}
    Then, choosing $\psi_i = \kappa \pi_{\tau_i}/c_i$, $i=1,\ldots,m$,
    leads to $p_{\tau_i} = c_i$.  Now, since $\pi_{\tau_i}>0$, the
    elements of $\psi$ are positive provided that $c_i>0$. Finally,
    taking the closure of $\mathcal{I}_\alpha$ allows to include the
    remaining points of the convex hull.
  \end{proof}
\end{lemma}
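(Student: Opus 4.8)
The plan is to lean on Theorem~\ref{thm:alphaAvg}, which already supplies one inclusion $\mathcal{I}_\alpha \subseteq \conv(\{\alpha_{(\tau)} \;|\; \tau\in[m,n]\})$, so that only the reverse inclusion remains to be shown (up to closure): every point of the convex hull must be realised as $\alpha(\psi)$ for some admissible $\psi$. Since Theorem~\ref{thm:alphaAvg} expresses $\alpha(\psi)$ as the convex combination of the $\alpha_{(\tau)}$ with weights $p_\tau(\psi)$, the task reduces to showing that the map $\psi \mapsto (p_\tau(\psi))_{\tau\in[m,n]}$ covers, up to closure, the whole probability simplex indexed by $[m,n]$.

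The structural fact I would exploit is the special combinatorics of the case $m=n+1$: here $[m,n]$ has exactly $m$ elements, the complements of singletons $\tau_i = [m]-\{i\}$, so the extreme points are naturally indexed by $i=1,\ldots,m$. This makes the monomials collapse, $\psi^{\tau_i} = \psi_i^{-1}\prod_{k}\psi_k$, and after cancelling the common factor $\prod_k \psi_k$ the weights simplify to $p_{\tau_i} = \psi_i^{-1}\pi_{\tau_i}\big/\sum_j \psi_j^{-1}\pi_{\tau_j}$, depending on $\psi$ only through the reciprocals $\psi_i^{-1}$. The point is that the number of free reciprocals now matches the number of extreme points, so this weight system can be inverted by hand.

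Concretely, given a target $\alpha_* = \sum_i c_i\alpha_{(\tau_i)}$ with $c_i>0$ and $\sum_i c_i=1$, I would set $\psi_i = \kappa\,\pi_{\tau_i}/c_i$ for an arbitrary scale $\kappa>0$; then $\psi_i^{-1}\pi_{\tau_i} = c_i/\kappa$, the denominator equals $1/\kappa$, and $p_{\tau_i}=c_i$ as required. Positivity of $\psi$ is immediate because $\pi_{\tau_i}>0$ (it is $\lambda^{\tau_i}$ times a product of squared eigenvalue gaps, all strictly positive under the standing assumptions on $\lambda$), so $\psi\in\Real^m_{++}\subseteq\mathcal{D}$ and the relative interior of the simplex lies in $\mathcal{I}_\alpha$. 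The boundary faces, where some $c_i$ vanish, are then recovered as limits, which is exactly why the statement must be phrased in terms of $\cl(\mathcal{I}_\alpha)$.

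The main obstacle, and the only place where $m=n+1$ is genuinely used, is this inversion step, and it is really a dimension count in disguise: the weights $p_\tau$ are invariant under scaling $\psi\mapsto c\psi$, so $\psi$ carries $m-1$ effective parameters, which matches the dimension $m-1$ of the simplex on $[m,n]$ precisely when $m=n+1$. For general $m$ the simplex has dimension $\binom{m}{n}-1 > m-1$, the map cannot be onto, and the range is only the non-convex polyhedral set studied later in Section~\ref{sec:geom}. Thus the substance is not the algebra of solving for $\psi$, which is the one-line substitution above, but recognising that the clean factorisation $\psi^{\tau_i}=\psi_i^{-1}\prod_k\psi_k$ makes the weight map a projective bijection onto the simplex in this case; the remaining care is merely verifying $\pi_{\tau_i}>0$ and passing to the closure.
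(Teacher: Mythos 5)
Your proposal is correct and takes essentially the same route as the paper's own proof: the same observation that $[m,n]$ consists of the complements $\tau_i=[m]-\{i\}$ so that $\psi^{\tau_i}=\psi_i^{-1}\prod_k\psi_k$, the same explicit inversion $\psi_i=\kappa\,\pi_{\tau_i}/c_i$ yielding $p_{\tau_i}=c_i$ with positivity from $\pi_{\tau_i}>0$, and the same passage to the closure for boundary points with vanishing $c_i$. Your added dimension-count remark explaining why $m=n+1$ is the only case where the weight map can be onto is sound supplementary commentary but does not alter the argument.
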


The second result is given in the next lemma, where it is shown that
any element of $\mathcal{I}_\alpha$ can be written as $\alpha(\psi)$,
with the cardinality of $\psi$ being $n+1$.  For simplicity and
without loss of generality that result is derived in term of $z$
instead of $\alpha$.

\begin{lemma}\label{thm:reduce}
  Let $z_\star \in \mathcal{I}_z$, there exists $\psi \in \Real^n$, $\psi
  \geq 0$, with cardinality $n+1$ such that $z_\star = z(\psi)$.

  \begin{proof}
    Consider a vector $\psi_\star \in \mathcal{D}$ such that $z_\star
    = z(\psi)$ and $\ones^T \psi_\star = 0$.  Note that, by the
    homogeneity of $z$, this normalisation can be imposed without
    losing in generality.
    Now, as remarked in Section~\ref{sec:geom}, such a vector should
    satisfy
    \begin{align*}
      V^T Z_\star \psi_\star = 0, \quad \ones^T \psi_\star = 0, \quad \psi_\star \geq 0
    \end{align*}
    That equation states that the origin belong to the convex-hull of
    the columns of $Z_\star V$.  Then, by the Minkowski-Carath\`eodory
    theorem, it is also contained into the convex hull of a subset of
    this set of vectors having cardinality $n+1$. That is,
    \begin{align*}
      0 &= V^TZ_\star \psi, & 
      \psi \geq 0,\quad
      \ones^T \psi = 1,\quad
      \card(\psi) = n+1,
    \end{align*}
    which implies also that $\psi \in \mathcal{D}$ and thus $z(\psi) = z_\star$.
  \end{proof}
\end{lemma}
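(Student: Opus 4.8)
The plan is to recast the statement as a convex-geometry fact about the columns of $V^{T}Z_{\star}$ and then invoke Carath\'eodory's theorem. By definition of $\mathcal{I}_z$ there is at least one $\psi_{\star}\in\mathcal{D}$ with $z(\psi_{\star})=z_{\star}$, and since $Q(\psi)$, hence $z(\psi)$, is homogeneous of degree zero in $\psi$---replacing $\psi$ by $c\psi$ cancels the scalar between $(V^{T}\Psi\Lambda V)^{-1}$ and $V^{T}\Psi$---I may normalise so that $\ones^{T}\psi_{\star}=1$. The algebraic hinge is that diagonal scalings commute, $\Psi z=Z\psi$, so the defining relation $V^{T}\Psi z_{\star}=0$ becomes $V^{T}Z_{\star}\psi_{\star}=0$, a homogeneous linear system in the unknown $\psi_{\star}$ rather than in $z_{\star}$.

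Next I would extract the geometric content. Writing $w_i=z_{\star,i}v_i\in\Real^{n}$ for the $i$-th column of $V^{T}Z_{\star}$, where $v_i=(1,\lambda_i,\ldots,\lambda_i^{n-1})^{T}$ is the $i$-th row of $V$, the conditions $V^{T}Z_{\star}\psi_{\star}=0$, $\ones^{T}\psi_{\star}=1$ and $\psi_{\star}\geq 0$ say precisely that the origin of $\Real^{n}$ is a convex combination of $w_1,\ldots,w_m$, i.e. $0\in\conv\{w_1,\ldots,w_m\}$. Carath\'eodory's theorem in $\Real^{n}$ then yields a sub-family of at most $n+1$ of the $w_i$ whose convex hull still contains the origin; its barycentric weights define a new $\psi\geq 0$ with $\ones^{T}\psi=1$, support of size at most $n+1$, and $V^{T}Z_{\star}\psi=0$.

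Finally I would close the loop. The vector $z_{\star}$ is untouched and still satisfies $V^{T}\Psi z_{\star}=0$ with $z_{\star}\in\mathcal{A}$; since for $\psi\in\mathcal{D}$ the affine space $\mathcal{A}$ meets the solution set of $V^{T}\Psi z=0$ in exactly one point, one obtains $z(\psi)=z_{\star}$ as soon as $\psi\in\mathcal{D}$, that is $\card(\psi)\geq n$. Securing this lower bound on the support is the step I expect to be the real obstacle, since Carath\'eodory controls the support only from above. I would settle it using the general position of the moment-curve points: any $n$ rows of $V$ are linearly independent, so when every $z_{\star,i}\neq 0$ the nonzero vectors $w_i$ inherit this independence and no $n$ (or fewer) of them can vanish under a positive combination; the support must then have size at least $n$, and in fact exactly $n+1$. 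The only delicate case is when $z_{\star}$ has vanishing entries (the axes with $\omega_{\star,i}=1$): there the matching columns $w_i$ are zero, so they may be appended to the support with any positive weight without perturbing $V^{T}Z_{\star}\psi=0$, which lets me pad the cardinality up to $n+1$ while preserving $z(\psi)=z_{\star}$.
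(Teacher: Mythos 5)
Your overall route coincides with the paper's: normalise $\psi_\star$, use $\Psi_\star z_\star = Z_\star\psi_\star$ to read the defining relation as $0\in\conv\{z_{\star,1}v_1,\ldots,z_{\star,m}v_m\}$, and invoke Carath\'eodory in $\Real^n$. On one point you are in fact more careful than the paper: the paper's proof takes $\card(\psi)=n+1$ straight from Carath\'eodory, which only bounds the support from \emph{above}, while you correctly identify that a lower bound is needed for $\psi\in\mathcal{D}$ (otherwise $z(\psi)$ is not even defined) and you supply it via the linear independence of any $n$ rows of the Vandermonde matrix. When $\card(z_\star)=m$ your argument is complete and yields support exactly $n+1$. (You also silently repair the paper's normalisation typo, $\ones^T\psi_\star=1$ rather than $0$.)

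The genuine gap is your padding step in the degenerate case. Since $z_\star=\ones-\Lambda V\alpha$, the entries $z_{\star,i}=p(\lambda_i)$ are the values at $m$ distinct points of a polynomial of degree at most $n$ with $p(0)=1$, so $z_\star$ has \emph{at most $n$} zero entries (this is the observation behind Proposition~\ref{thm:phiznull}). Consequently, if Carath\'eodory hands you a support made of zero columns --- which it may, since a single zero column already contains the origin in its convex hull --- padding with the remaining zero columns reaches cardinality at most $n$, never $n+1$; and if $z_\star$ has fewer than $n$ zeros, the padded $\psi$ is not in $\mathcal{D}$ and $z(\psi)$ is undefined, so the claim ``preserving $z(\psi)=z_\star$'' has no content. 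Worse, at some corner points cardinality $n+1$ is genuinely unattainable, so no padding scheme can succeed: take $z_\star=z_{(\tau)}$ with $\tau=\{1,\ldots,n\}$; then $z_{\star,i}>0$ for every $i\notin\tau$ by \eqref{eq:zSel}, and since the first column of $V$ is $\ones$, any $\psi\geq0$ with $V^TZ_\star\psi=0$ satisfies $\sum_i z_{\star,i}\psi_i=0$, forcing the support of $\psi$ into $\tau$, hence $\card(\psi)\leq n$. So the exact-cardinality claim of the lemma fails there --- a defect your attempt shares with the paper's own proof; the statement that is both true and sufficient for the subsequent corollary is $\card(\psi)\in\{n,n+1\}$. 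The clean repair of your degenerate case is to delete from $\psi_\star$ the coordinates at which $z_\star$ vanishes, which does not disturb $V^TZ_\star\psi_\star=0$: if positive mass remains, apply Carath\'eodory to the nonzero columns only, and your independence argument gives support exactly $n+1$ with no padding at all; if nothing remains, the support of $\psi_\star$ lies in the zero set of $z_\star$, which by $\card(\psi_\star)\geq n$ must have exactly $n$ elements, and then $z_\star=z_{(\tau)}$ is attained with cardinality $n$.
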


Now, a precise description of $\mathcal{I}_z$ and, in turns of
$\mathcal{I}_\alpha$, follows directly as a corollary of Lemmata
\ref{thm:domEqConv} and~\ref{thm:reduce}.
\begin{corollary}
  \begin{align*}
      \cl(\mathcal{I}_z) &= 
      \bigcup_{T \in [m,n+1]} 
      \conv\!\Big( \Big\{ z_{(\tau)}, \; \tau \in \binom{T}{n} \Big\} \Big).
  \end{align*}
\end{corollary}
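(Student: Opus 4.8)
The plan is to establish the two inclusions separately, using that the right-hand side is a finite union of convex hulls of finite point sets and is therefore compact, hence closed; it then suffices to show that $\mathcal{I}_z$ is contained in it and that each of its pieces is contained in $\cl(\mathcal{I}_z)$.

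For the inclusion $\cl(\mathcal{I}_z) \subseteq \bigcup_{T\in[m,n+1]} \conv(\{z_{(\tau)} : \tau\in\binom{T}{n}\})$, I would start from an arbitrary $z_\star \in \mathcal{I}_z$ and invoke Lemma~\ref{thm:reduce} to obtain a nonnegative $\psi$ with cardinality $n+1$ and $z(\psi)=z_\star$. Writing $T=\{k : \psi_k\neq 0\}\in[m,n+1]$ for its support, I apply the averaging representation of Corollary~\ref{thm:avg}, $z(\psi)=\sum_{\tau\in[m,n]} p_\tau(\psi)\, z_{(\tau)}$. The key observation is that each weight $p_\tau(\psi)$ carries the factor $\psi^\tau=\prod_{i\in\tau}\psi_i$, which vanishes unless $\tau\subseteq T$; hence only the indices $\tau\in\binom{T}{n}$ contribute and $z_\star$ is a convex combination of $\{z_{(\tau)} : \tau\in\binom{T}{n}\}$. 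This places $z_\star$ in the $T$-term of the union, so $\mathcal{I}_z$ is contained in the right-hand side, and taking closures preserves the inclusion because the right-hand side is closed.

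For the reverse inclusion I would fix a single $T\in[m,n+1]$ and restrict attention to vectors $\psi\geq 0$ supported on $T$. The point is that this restricted family is exactly an instance of the $m=n+1$ situation of Lemma~\ref{thm:domEqConv}, with eigenvalues $\{\lambda_i : i\in T\}$: for $\tau\subseteq T$ the quantities $\pi_\tau$ and $\psi^\tau$ depend only on indices inside $T$, and the normalising sum in $p_\tau(\psi)$ collapses to $\sum_{s\in\binom{T}{n}}\psi^s\pi_s$. Consequently the weight-achievability argument in the proof of Lemma~\ref{thm:domEqConv}---choosing $\psi_i=\kappa\,\pi_{\tau_i}/c_i$---carries over verbatim and shows that every prescribed convex weighting of $\{z_{(\tau)} : \tau\in\binom{T}{n}\}$ is attained by some admissible $\psi$, up to the closure needed to reach the faces where some weight vanishes. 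Thus $\conv(\{z_{(\tau)} : \tau\in\binom{T}{n}\})\subseteq\cl(\mathcal{I}_z)$, and taking the union over all $T\in[m,n+1]$ yields the remaining inclusion.

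The two inclusions, together with the closedness of the right-hand side, give the claimed identity. The step I expect to require the most care is the reverse inclusion: one must argue cleanly that fixing the support $T$ genuinely reduces the problem to the $m=n+1$ case so that Lemma~\ref{thm:domEqConv} applies, checking in particular that the $z_{(\tau)}$ appearing in the sub-problem are the same full $m$-dimensional extreme vectors and that the restricted weights coincide with those of the ambient problem. Tracking the closure on both sides---to absorb the faces of each simplex where the cardinality of $\psi$ drops below $n+1$---is the other detail that must be handled, but it is routine once this point-set description is in place.
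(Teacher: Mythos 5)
Your proposal is correct and follows essentially the same route as the paper, which states that the corollary ``follows directly'' from Lemmata~\ref{thm:domEqConv} and~\ref{thm:reduce} --- precisely the two inclusions you spell out: the forward one via Lemma~\ref{thm:reduce} together with the observation that $p_\tau(\psi)=0$ unless $\tau$ lies in the support $T$ of $\psi$, and the reverse one by running the weight-achieving construction $\psi_i = \kappa\,\pi_{\tau_i}/c_i$ of Lemma~\ref{thm:domEqConv} on each fixed support $T\in[m,n+1]$, with closure absorbing the faces. Your added care points (closedness of the finite union of simplices, and checking that restricting $\psi$ to support $T$ reproduces the $m=n+1$ situation with the same ambient vectors $z_{(\tau)}$ and collapsed normalising sum) are exactly the details the paper leaves implicit, and they check out.
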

Note that, in general, the image of $z$ does not correspond to the
convex hull of the points $z_{(\tau)}$, $\tau \in
[m,n]$.
However, to characterise its geometry it is sufficient to study the
relations between the tetrahedra with vertices
$\{ z_{(\tau)}, \tau \in \binom{T}{n} \}$, $T \in [m,n+1]$.
The following proposition states that all the points $z_{(\tau)}$ that
share a common subset $\tau_0$ are aligned in a subspace of dimensions
$n-k$. In particular, when $k=n-1$ these points are aligned (see also
Corollary~\ref{thm:alphaPsik}).
\begin{proposition}
  Let $k<n$ and $\tau_0 \in [m,k]$. Then, any $z_{(\tau)}$ with
  $\tau \in [m,n]$ and $\tau_0 \subset \tau$ belongs to the
  subspace of $\Real^m$
  \begin{align*}
    \mathcal{Z}_{\tau_0} = 
    \{ 
    z \subset \Real^m \;|\; z_i = 0, \text{ for } i \in \tau_0 
    \}.
  \end{align*}

  \begin{proof}
    Let $S_\tau$ be the selection matrix corresponding to $\tau$.  To
    prove this result it is sufficient to note that $S_{\tau}^T z_{(\tau)}
    = 0$ and that $\tau_0 \subset \tau$.
  \end{proof}
\end{proposition}

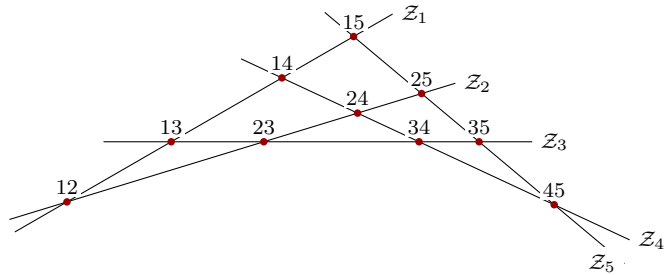
\begin{figure}[b]
  \centering

\begin{tikzpicture}[
  po/.style={fill=red!60!black,shape=circle,inner sep=1pt},
  pl/.style={above,font=\scriptsize,fill=white,inner sep=1pt,outer sep=2pt},
  li/.style={fill=white,font=\scriptsize},
  si/.style={font=\tiny,fill=white,inner sep=1pt}
  ]

  \draw[name path=one]  (30:0)   ++(30:0)    -- ++(30:5.8)  node[li,right] {$\mathcal{Z}_1$};
  \draw[name path=two]  (30:.8)  ++(17:-.8)  -- ++(17:6.2)  node[li,right] {$\mathcal{Z}_2$};
  \draw[name path=three](30:2.4) ++(0:-.9)   -- ++(0:5.7)   node[li,right] {$\mathcal{Z}_3$};
  \draw[name path=four] (30:4.1) ++(-25:-.6) -- ++(-25:5.7) node[li,right] {$\mathcal{Z}_4$};
  \draw[name path=five] (30:5.2) ++(-40:-.5) -- ++(-40:5.2) node[li,left] {$\mathcal{Z}_5$};

  \node [name intersections={of=one and two, by=ab},po] at (ab) {};
  \node [name intersections={of=one and three, by=ac},po] at (ac) {};
  \node [name intersections={of=one and four, by=ad},po] at (ad) {};
  \node [name intersections={of=one and five, by=ae},po] at (ae) {};

  \node [name intersections={of=two and three, by=bc},po] at (bc) {};
  \node [name intersections={of=two and four, by=bd},po] at (bd) {};
  \node [name intersections={of=two and five, by=be},po] at (be) {};

  \node [name intersections={of=three and four, by=cd},po] at (cd) {};
  \node [name intersections={of=three and five, by=ce},po] at (ce) {};

  \node [name intersections={of=four and five, by=de},po] at (de) {};

  \path node[pl] at (ab) {$12$};
  \path node[pl] at (ac) {$13$}  node[pl] at (bc) {$23$};
  \path node[pl] at (ad) {$14$}  node[pl] at (bd) {$24$} node[pl] at (cd) {$34$};
  \path node[pl] at (ae) {$15$}  node[pl] at (be) {$25$} node[pl] at (ce) {$35$};

  \path node[pl] at (de) {$45$};

\end{tikzpicture}
  \medskip

  \caption{\footnotesize %
    Geometry of $z_{(\tau)}$ for $m=5,n=2$.  Each line represents an
    affine space $\mathcal{Z}_i$.}
  \label{fig:aGeom2}

\end{figure}
\begin{figure}[htb]
   \centering

\tdplotsetmaincoords{30}{40}
\begin{tikzpicture}[baseline=(current bounding box.north),
  scale=1,
  tdplot_main_coords,
  decoration={random steps,segment length=5pt,amplitude=2pt},
  po/.style={fill=black,shape=circle,inner sep=.7pt},
  pl/.style={above,font=\scriptsize,fill=white,inner sep=1pt,outer sep=2pt},
  li/.style={shape=rectangle,fill=white,draw,inner sep=1.5pt,outer sep=2pt,font=\scriptsize},
  hl/.style={thin,dotted}
  ]

  \path 
      coordinate (abc) at (3,0,0) {}
      coordinate (abf) at (0,6,-1) {}
      coordinate (aef) at (8,6,0) {}
      coordinate (def) at (8,6,12) {};
  \path (abc) -- (abf) 
      coordinate[pos=0.2] (abd) {} coordinate[pos=0.6] (abe) {};
  \path (abf) -- (aef)
      coordinate[pos=0.18] (acf) {} coordinate[pos=0.6] (adf) {};
  \path (aef) -- (def) 
      coordinate[pos=0.42] (bef) {} coordinate[pos=0.73] (cef) {};

  \draw[hl,name path=ab] (abc) -- (abf);
  \draw[hl,name path=ac] (abc) -- (acf);
  \draw[hl,name path=ad] (abd) -- (adf);
  \draw[hl,name path=ae] (abe) -- (aef);
  \draw[hl,name path=af] (abf) -- (aef);
  \draw[hl,name path=bf] (abf) -- (bef);
  \draw[hl,name path=cf] (acf) -- (cef);
  \draw[hl,name path=df] (adf) -- (def);
  \draw[hl,name path=ef] (aef) -- (def);
  
  \node[name intersections={of=ac and ad, by=acd}] at (acd) {};
  \node[name intersections={of=ac and ae, by=ace}] at (ace) {};
  \node[name intersections={of=ad and ae, by=ade}] at (ade) {};
  \node[name intersections={of=bf and cf, by=bcf}] at (bcf) {};
  \node[name intersections={of=bf and df, by=bdf}] at (bdf) {};
  \node[name intersections={of=cf and df, by=cdf}] at (cdf) {};

  \draw[hl,name path=bc] (abc) -- (bcf);
  \draw[hl,name path=bd] (abd) -- (bdf);
  \draw[hl,name path=be] (abe) -- (bef);
  \draw[hl,name path=cd] (acd) -- (cdf);
  \draw[hl,name path=de] (ade) -- (def);

  \node[name intersections={of=bc and bd, by=bcd}] at (bcd) {};
  \node[name intersections={of=bd and be, by=bde}] at (bde) {};
  \node[name intersections={of=bc and be, by=bce}] at (bce) {};
  \node[name intersections={of=cd and de, by=cde}] at (cde) {};

  \draw[hl] (ace) -- (bce);

  \fill[fill=orange,opacity=.4] (acd) -- (acf) -- (cdf) -- (acd);
  \fill[fill=yellow,opacity=.4] (acd) -- (adf) -- (cdf) -- (acd);
  \fill[fill=red,opacity=.4] (abc) -- (bcf) -- (abf) -- (abc);
  \fill[fill=brown,opacity=.4] (ade) -- (aef) -- (def) -- (ade);

  \draw[hl,dashed] (abd) -- (bde) -- (bef);
  \draw[hl,dashed] (abe) -- (bce) -- (cef);

  \draw[thick] (abc) -- (abf) -- (bcf) -- (cdf);
  \draw[thick] (def) -- (aef) -- (ade) -- (acd);
  \draw[thick,dotted] (abc) -- (acd);
  \draw[thick,dotted] (cdf) -- (def);
  \draw[thick] (abc) -- (bcd) -- (cde) -- (def);
  \draw[thick,dotted] (abf) -- (aef);
  \draw[thin] (bcd) -- (bcf);
  \draw[thin] (acd) -- (bcd);
  \draw[thin] (ade) -- (cde);
  \draw[thin,dotted] (cde) -- (cdf);

  \path[pl] 
    node at (abc) {$123$} node at (abd) {$124$}
    node at (abe) {$125$} node at (abf) {$126$}
    node at (acd) {$134$} node at (ace) {$135$}
    node at (acf) {$136$} node at (ade) {$145$}
    node at (adf) {$146$} node at (aef) {$156$};
  \path[pl]
    node at (bcd) {$234$} node at (bce) {$235$}
    node at (bcf) {$236$} node at (bde) {$245$}
    node at (bdf) {$246$} node at (bef) {$256$};
  \path[pl]
    node at (cde) {$345$} node at (cdf) {$346$}
    node at (cef) {$356$} node at (def) {$456$};
\end{tikzpicture}
\qquad
\begin{tikzpicture}[
  baseline=(current bounding box.north),
  leg/.style={minimum size=1ex,draw=black,fill opacity=.4},
  legText/.style={font=\footnotesize}
  ]
  \node[leg,fill=red,   label={[legText]right:{$\mathcal{Z}_2$}}] (r) at (0,0) {}; 
  \node[leg,fill=orange,label={[legText]right:{$\mathcal{Z}_3$}}] (o) at (0,-2em) {}; 
  \node[leg,fill=yellow,label={[legText]right:{$\mathcal{Z}_4$}}] (y) at (0,-4em) {}; 
  \node[leg,fill=brown, label={[legText]right:{$\mathcal{Z}_5$}}] (b) at (0,-6em) {}; 
\end{tikzpicture}
  
\caption{\footnotesize %
  Geometry of $z_{(\tau)}$ for $m=6$, $n=3$ projected on the affine
  space $\mathcal{A}$.  Each affine space $\mathcal{Z}_i$ is shown in
  a different color except for $\mathcal{Z}_1$ and $\mathcal{Z}_6$
  which are not visible here.  Lines correspond to the affine spaces
  $\mathcal{Z}_i \cap \mathcal{Z}_j$, $i \neq j$. }
  \label{fig:aGeom3}
\end{figure}
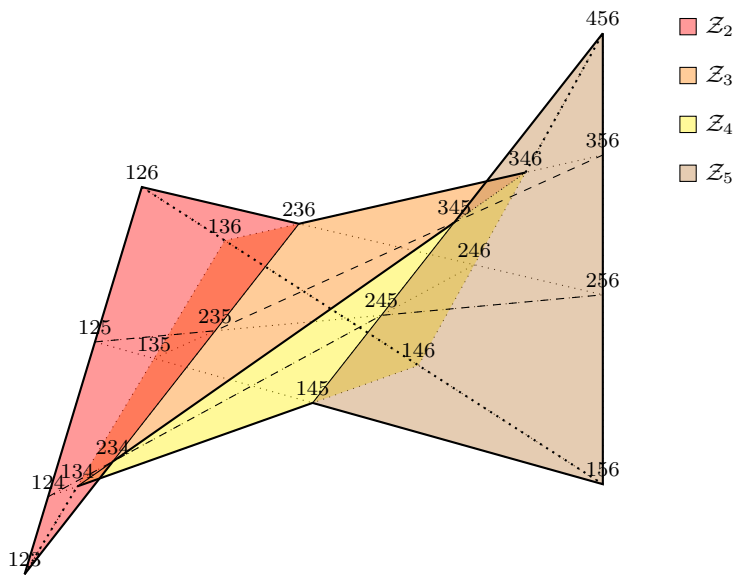
When $n-k=1$, besides being aligned, the points $z_{(\tau)}$,
$\tau \supset \tau_0$, follow also a specific ordering. Indeed, let
$\tau = \tau_0 \cup \{ k \}$, by \eqref{eq:zSel} the elements of
$z_{(\tau)}$ can be written as
$z_{(\tau),i} = 
  ( 1 - \lambda_i/\lambda_k )
  \prod_{j \in \tau_0} ( 1 - \lambda_i/\lambda_j )$,
$i=1,\ldots,n$.
For example, when $n=3$, the points 
$z_{(2\underline{3}7)},
 z_{(2\underline{4}7)},
 z_{(2\underline{5}7)}$ and 
$z_{(2\underline{6}7)}$ are
all aligned and ordered, in the sense that 
$z_{(2\underline{4}7)}$ and
$z_{(2\underline{5}7)}$ belong to the segment
$z_{(2\underline{3}7)}-z_{(2\underline{6}7)}$ and $z_{(2\underline{5}7)}$ to
the segment $z_{(2\underline{4}7)}-z_{(2\underline{6}7)}$. 
It also implies that when $n=3$ the tetrahedron
$\mathcal{T}_{1234} = \conv\{ z_{(\tau)} \;|\; \tau \subset
\{1,2,3,4\}, \#\tau =3 \}$
is contained into the tetrahedron
$\mathcal{T}_{1236} = \conv\{ z_{(\tau)} \;|\; \tau \subset
\{1,2,3,6\}, \#\tau =3 \}$.
Two examples of the possible configurations of $z_{(\tau)}$ are
reported in Figures~\ref{fig:aGeom2} and~\ref{fig:aGeom3} for the
cases $n=2$ and $n=3$, respectively. It should be remarked that, even
though proportions changes with $\lambda$, the main features of the
geometry, that is spatial alignment and ordering, remain fixed.

Each linear space $\mathcal{Z}_i$, $i=1,\ldots,m$, defines a partition
of $\Real^m$ into three sets
$\mathcal{Z}_i^\pm = \{ z \;| \; \pm z_i > 0 \}$ and
$\mathcal{Z}_i$. Any signature of $z$ is associated to a specific
intersection of these sets. Figures~\ref{fig:aGeom2} and
\ref{fig:aGeom3} anticipated the geometry of the possible
configurations of the points $z_{(\tau)}$, $\tau \in [m,n]$
which ultimately depends on of possible signatures of the elements of
$\mathcal{I}_z$. This is the aspect that will be addressed in the next
subsection.

\subsection{The signature of $z$}

Any subset $\tau \in[m,n]$ with cardinality $n$ corresponds to a
single point $z_{(\tau)} \in \mathcal{I}_z$. Now, consider a subset
$T$ of length $n+1$. That set contains $n+1$ subsets of length $n$,
each one associated to a point of $\mathcal{I}_z$. It is not difficult
to show that these points are linear independent, and thus their
convex hull is a tetrahedron or, more precisely, an $n$-dimensional
simplex of $\Real^m$. The following definition introduces the notation
used to indicate that set.
\begin{definition}
  For $T \in [m,n+1]$, $\mathcal{T}_T$ denotes the
  $n$-dimensional simplex (a tetrahedron when $n=3$)
  \begin{align*}
    \mathcal{T}_T = 
    \conv\Big\{ z_{(\tau)}, \;|\;  \tau \in \binom{T}{n}  \Big\}.
  \end{align*}
\end{definition}

\begin{example}
  $\mathcal{T}_{12357}$ is the simplex having vertices $z_{(2357)}$,
  $z_{(1357)}$, $z_{(1257)}$, $z_{(1237)}$ and $z_{(1235)}$. This
  simplex has $n=5$ faces. Each of them is defined by a $n$ vertices
  and belong to one of the linear spaces
  $\mathcal{Z}_1,\ldots,\mathcal{Z}_5$. For instance, the corner
  points $z_{(1357)}$, $z_{(1257)}$, $z_{(1237)}$ and $z_{(1235)}$
  identify the face which belong to $\mathcal{Z}_1$. The following
  proposition will show that $\mathcal{T}_{12345} = (\mathcal{Z}_1^+
  \cup \mathcal{Z}_2^- \cup \mathcal{Z}_3^+ \cup \mathcal{Z}_4^- \cup
  \mathcal{Z}_5^+) \cap \mathcal{A}$. Recall that $\mathcal{A}$ is the
  $n$-dimensional affine space defined in \eqref{eq:Adef}.
\end{example}

Now, the signature of the elements in these simplices, $z \in
\mathcal{T}_T$, is considered.  It turns out that the signature at
some specific positions is fixed.
\begin{proposition}\label{thm:tetraSign}
  Let $z \in \mathcal{T}_t$ with $t = \{ t_0 < t_1 < \cdots < t_n\}$, 
  $t_0>0$ and $t_n\leq m$. Then,
  \begin{align*}
    (-1)^{n-k} z_{t_k} &\geq 0, 
    &
    k &= 1,\ldots,n,
    \\
    z_i &\geq 0, & i&>t_n,
    \\
    (-1)^nz_i &\geq 0, & i&<t_0.
  \end{align*}

  \begin{proof}
    Consider the $t_i$-th element of a generic corner point
    $z_{(\tau)}$, $\tau \in \binom{t}{n}$. Then, $z_{(\tau),i} = 0$ if
    $i\in \tau$ otherwise, by \eqref{eq:zSel},
    $(-1)^{n+1-i}z_{(\tau),i} >0$.  To conclude the proof it is
    sufficient to note that $z$ is a convex combination of the corner
    points.
  \end{proof}
\end{proposition}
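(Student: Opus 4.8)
The plan is to read off the exact sign of every coordinate of each vertex of $\mathcal{T}_t$ from formula \eqref{eq:zSel}, and then use that $z$ is a convex combination of those vertices. The decisive observation is that, because the eigenvalues are strictly decreasing ($\lambda_1 > \cdots > \lambda_m > 0$), each factor $1 - \lambda_i/\lambda_j$ in \eqref{eq:zSel} is strictly positive when $j < i$, strictly negative when $j > i$, and zero when $j = i$. Hence, for any index $i \notin \tau$, the sign of $z_{(\tau),i} = \prod_{j\in\tau}(1-\lambda_i/\lambda_j)$ is $(-1)^{c}$ with $c = \#\{\, j \in \tau : j > i \,\}$, while $z_{(\tau),i} = 0$ as soon as $i \in \tau$.

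First I would record that the $n+1$ vertices of $\mathcal{T}_t$ are exactly the points $z_{(\tau_\ell)}$ with $\tau_\ell = t \setminus \{t_\ell\}$, $\ell = 0,1,\ldots,n$, and write $z = \sum_{\ell=0}^{n} c_\ell\, z_{(\tau_\ell)}$ with $c_\ell \geq 0$ and $\sum_\ell c_\ell = 1$. The three families of inequalities then follow by examining three coordinate regimes separately, in each case applying the sign rule above to every vertex and combining convexly.

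For a coordinate $i = t_k$ lying in $t$, note that $t_k \in \tau_\ell$ for every $\ell \neq k$, so all vertices except $z_{(\tau_k)}$ vanish in that coordinate. The surviving index set $\tau_k = t \setminus \{t_k\}$ contains exactly the elements $t_{k+1},\ldots,t_n$ that exceed $t_k$, so $c = n-k$ and $z_{(\tau_k),t_k}$ has sign $(-1)^{n-k}$; the convex combination therefore gives $(-1)^{n-k} z_{t_k} \geq 0$ for $k = 1,\ldots,n$. For a coordinate $i > t_n$, no element of $t$ (hence of any $\tau_\ell$) exceeds $i$, so $c = 0$ and every $z_{(\tau_\ell),i} > 0$, whence $z_i \geq 0$. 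For a coordinate $i < t_0$, all $n$ elements of each $\tau_\ell$ exceed $i$, so $c = n$ and every $z_{(\tau_\ell),i}$ has sign $(-1)^n$, whence $(-1)^n z_i \geq 0$.

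There is no genuine obstacle here; the one step that demands care is the sign bookkeeping, namely verifying that the number of indices of $\tau_k = t\setminus\{t_k\}$ strictly larger than $t_k$ is precisely $n-k$, so that the parity $(-1)^{n-k}$ in the statement comes out correctly, and checking that in the $i>t_n$ and $i<t_0$ regimes the count $c$ is uniform across all vertices. Once each vertex coordinate has a fixed sign, the conclusion is immediate, since a convex combination of numbers that are all $\geq 0$ (respectively all $\leq 0$) inherits that sign.
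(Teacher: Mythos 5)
Your proof is correct and takes essentially the same route as the paper's: determine the sign of every coordinate of each vertex $z_{(\tau)}$ directly from \eqref{eq:zSel} and then conclude by convexity, using that at a coordinate $t_k \in t$ all vertices but $z_{(\tau_k)}$ vanish. Your explicit bookkeeping --- the sign of $z_{(\tau),i}$ is $(-1)^{\#\{j\in\tau\,:\,j>i\}}$, giving $(-1)^{n-k}$ at the surviving vertex and uniform signs in the regimes $i>t_n$ and $i<t_0$ --- is in fact a cleaner and more careful rendering of the sign rule that the paper states loosely as $(-1)^{n+1-i}z_{(\tau),i}>0$.
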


\begin{proposition}\label{thm:tetraSignUndet}
  Let $t = a \cup b \in [m,n+1]$ and $i \in [m]$. 
  If 
  \begin{align*}
    \max a < i <\min b,
  \end{align*}
  then $\mathcal{Z}_i$ crosses the interior of the simplex
  $\mathcal{T}_t$. 
  That is, there exists  $z^+,z^-,z^0 \in \mathcal{T}_t$ such that
  $z^+_i>0$, $z^-_i<0$ and $z^0_i= 0$.
  \begin{proof}
    Choose $j \in a$ and $k \in b$, and set $s = T - \{j\}$ and
    $t = T- \{k\}$.  The $i$-th elements of $z_{(s)}$ and $z_{(t)}$
    have different signs. Indeed, by \eqref{eq:zSel},
    \begin{align*}
      z_{(s),i}z_{(t),i} 
      &= 
      \prod_{h \in s}\Big( 1 - \frac{\lambda_i}{\lambda_j} \Big)
      \prod_{h \in t}\Big( 1 - \frac{\lambda_i}{\lambda_k} \Big)
      \\
      &= 
      \Big( 1 - \frac{\lambda_i}{\lambda_j} \Big)
      \Big( 1 - \frac{\lambda_i}{\lambda_k} \Big)
      \prod_{h \in s \cap t}\Big( 1 - \frac{\lambda_i}{\lambda_h}
      \Big)^2
      < 0,
    \end{align*}
    since $\lambda_j > \lambda_i > \lambda_k$. 
  \end{proof}
\end{proposition}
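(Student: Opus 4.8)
The plan is to find two vertices of the simplex $\mathcal{T}_t$ whose $i$-th coordinates have opposite signs and then invoke convexity to manufacture the three required points. The one ingredient needed is the factored corner formula \eqref{eq:zSel}, $z_{(\tau),i} = \prod_{j\in\tau}(1-\lambda_i/\lambda_j)$, together with the sign of its factors: because the eigenvalues are strictly decreasing, $1-\lambda_i/\lambda_j$ is positive precisely when $j<i$ and negative precisely when $j>i$.

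First I would record what the hypothesis $\max a < i < \min b$ gives. It forces $i\notin t$ and makes $a=\{j\in t: j<i\}$ and $b=\{j\in t: j>i\}$, so both $a$ and $b$ are nonempty. For a vertex $z_{(\tau)}$ with $\tau=t\setminus\{j\}$ we have $i\notin\tau$, so no factor in \eqref{eq:zSel} vanishes and the sign of $z_{(\tau),i}$ is $(-1)^{\card(\{j\in\tau: j>i\})}=(-1)^{\card(b\cap\tau)}$.

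Next, choose any $j_0\in a$ and $k_0\in b$ and set $\tau_1=t\setminus\{j_0\}$ and $\tau_2=t\setminus\{k_0\}$. Removing $j_0<i$ leaves $b$ untouched, whereas removing $k_0>i$ deletes exactly one element of $b$; hence $\card(b\cap\tau_1)$ and $\card(b\cap\tau_2)$ differ by one, the two parities are opposite, and $z_{(\tau_1),i}$ and $z_{(\tau_2),i}$ are both nonzero and of opposite sign (this is the inequality $z_{(\tau_1),i}\,z_{(\tau_2),i}<0$). Taking these vertices as $z^+$ and $z^-$, a convex combination along the edge joining them has vanishing $i$-th coordinate and serves as $z^0$. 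To upgrade this to a crossing of the \emph{interior} of $\mathcal{T}_t$, I would note that $z\mapsto z_i$ is a non-constant linear functional on the simplex and therefore attains every value strictly between its extrema on the relative interior; in particular it attains $0$ there.

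The only delicate point is the sign bookkeeping: one must check that deleting an index below $i$ versus above $i$ always changes the count of negative factors by exactly one, so that the overall sign flips. Once the two opposite-sign vertices are exhibited, the remainder---including the interior (as opposed to merely boundary) crossing---is a routine convexity argument.
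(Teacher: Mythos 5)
Your proof is correct and takes essentially the same route as the paper: both compare the two vertices $z_{(t\setminus\{j_0\})}$ and $z_{(t\setminus\{k_0\})}$ with $j_0\in a$, $k_0\in b$, and deduce opposite signs of the $i$-th coordinate from \eqref{eq:zSel} and the eigenvalue ordering --- the paper factors the product of the two coordinates as $(1-\lambda_i/\lambda_{j_0})(1-\lambda_i/\lambda_{k_0})$ times squared terms, while you count the parity of negative factors, which is the same computation in different clothing. Your final step, locating a zero of the linear functional $z\mapsto z_i$ in the relative interior via the fact that affine maps commute with relative interiors, is in fact slightly more careful than the paper, which exhibits only the opposite-sign vertices and leaves the interior-crossing claim implicit.
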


Propositions~\ref{thm:tetraSign} and~\ref{thm:tetraSignUndet}
establishes a pattern for the signature of the elements of a simplex
$\mathcal{T}_t$, $t \in [m,n+1]$. The following example gives
an illustration of the above results.

\begin{example}\label{ex:1}
  Let $m=12$, $n=4$, $t= \{ 2,5,7,8,10 \}$. Then, the signature of the
  corner elements of $\mathcal{T}_t$ and of a generic element $z$ is
  reported in Table~\ref{tab:exBis}, where $+$, $-$, $0$ and $\times$
  denote non-negative, non-positive, null and unconstrained elements.
  \begin{table}[h]
    \caption{
      Signatures of the vertices of
      $\mathcal{T}_{\{2,5,7,8,10\}}$. The last row shows the sign pattern 
      of an internal point as prescribed by
      Propositions~\ref{thm:tetraSign} 
      and~\ref{thm:tetraSignUndet}.
    }
    \label{tab:exBis}

    \begin{align*}
      {
        \setlength{\arraycolsep}{2pt}
        \begin{array}{lc*{12}{c}}
      \hline
                     &&  & & & & & & & & &1&1&1 \\[-0.5ex]
                     && 1&2&3&4&5&6&7&8&9&0&1&2 \\
      \hline
      z_{(5,7,8,10)}   && +&+&+&+&0&-&0&0&-&0&+&+ \\
      z_{(2,7,8,10)}   && +&0&-&-&-&-&0&0&-&0&+&+ \\
      z_{(2,5,8,10)}   && +&0&-&-&0&+&+&0&-&0&+&+ \\
      z_{(2,5,7,10)}   && +&0&-&-&0&+&0&-&-&0&+&+ \\
      z_{(2,5,7,8) }   && +&0&-&-&0&+&0&0&+&+&+&+ \\
      \hline
      z   && +&+&\times& \times &-& \times &+&-& \times&+&+&+ \\
      \hline
         \end{array}
       }
    \end{align*}

  \end{table}
\end{example}

Proposition~\ref{thm:tetraSignUndet} only characterise the marginal
structure of the signature of $z$. There is, however, a conjoint
structure that links consecutive undetermined signs.
The investigation of that conjoint structure is here tackled by using
tools related to the concept of total positivity of matrices
\cite{FallatJohnson:2011}. Before proceeding with the proof, it is
necessary to introduce some definitions and results on that subject.
\begin{definition}[Total positivity]
  A matrix $A \in \Real^{m \times n}$ is totally non-negative
  (positive) if the determinant of any square submatrix of $A$ is
  non-negative (positive).
\end{definition}

Applying a totally non-negative (positive) matrix to a vector cannot
increase the number of sign variations. This number is not uniquely
identified when some of the elements are null and maximal/minimal
quantities should be used.
\begin{definition}
  Let $x \in \Real^n$ be a vector having cardinality $n$ (no zero
  elements), then $v(x)$ denotes the number of sign changes in $x$:
  \begin{align*}
    v(x) &= \# \{ x_{i}x_{i+1}<0, \; 1 < i \leq n \}.
  \end{align*}
\end{definition}
\begin{definition}\label{def:vMinMax}
  For $x\in \Real^n$,
  \begin{align*}
    v_m(x) &= \min \{ 
    v(y) \;|\; 
    \card(y) = n \text{ and }  x_iy_i \geq 0,\; 1 \leq i\leq n
    \},
    \\
    v_M(x) &= \max \{ 
    v(y) \;|\; 
    \card(y) = n \text{ and }  x_iy_i \geq 0,\; 1 \leq i\leq n
    \},
  \end{align*}
  where $x \in \Real^n$.
\end{definition}
The max (min) used in Definition~\ref{def:vMinMax} is computed among
the vectors $y$ whose signature is concordant with that of the
non-null part of $x$.
That is, $v_m(x)$ counts the number of sign changes once the null
elements have been removed from $x$. Let $\bar{x}$ be obtained from
$x$, by replacing the zero elements with signed values, then
$v(\bar{x}) \in [v_m(x), v_M(x)]$. Then, the definition of $v(x)$
could be extended to the mapping $x \to [v_m(x),v_M(x)] \cap
\mathbb{N}$.

These measures of sign changes satisfy the property
\begin{align*}
  v_m(x) &\leq v_M(x) \leq n-1,
  &
  x &\in \Real^n.
\end{align*}
As stated above, totally non-negative (positive) matrices do not increase
the number of sign changes when applied to a vector. That fact is 
more precisely stated in the following Theorem.
\begin{theorem}\label{thm:TPchanges}
  Let $A: m \times n$ and $x \in \Real^n$.  %
  If $A$ is totally non-negative then $v_m(Ax) \leq v_m(x)$.  %
  If $A$ is totally positive and $x \neq 0$, then
  \begin{align*}
    v_M(Ax) &\leq v_m(x).
  \end{align*}
  \begin{proof}
    See Theorems 4.2.2 and 4.3.5 in \cite{FallatJohnson:2011}.
  \end{proof}
\end{theorem}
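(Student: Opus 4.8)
The statement is the classical variation-diminishing property of totally (non-)negative matrices, and the plan is to prove it with the compound-matrix/Cauchy--Binet machinery that already drives Theorem~\ref{thm:alphaAvg}, rather than merely citing \cite{FallatJohnson:2011}. Write $y = Ax$ and set $k = v_m(x)$. The two claims are $v_m(y)\le k$ when $A$ is totally non-negative and the sharper $v_M(y)\le k$ when $A$ is totally positive and $x\neq 0$. I would argue by contraposition: assuming the conclusion fails one exhibits ``too many'' strict sign alternations in $y$ and contradicts the positivity of the minors of $A$.

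First I would record the combinatorial characterisation of sign changes that makes $v_m$ and $v_M$ amenable to determinantal arguments: a vector $w\in\Real^m$ has $v_M(w)\ge r$ exactly when one can select rows $i_0<i_1<\cdots<i_r$ on which the entries $w_{i_0},\ldots,w_{i_r}$ are nonzero and \emph{strictly} alternate in sign, while $v_m(w)\ge r$ is the same condition after deleting the zero entries. Assuming, for the totally positive case, that $v_M(y)\ge k+1$, I would pick such alternating rows $i_0<\cdots<i_{k+1}$ and pass to the tall $(k+2)\times n$ submatrix $\hat A = S_\sigma^T A$ selecting them, so that $\hat A x=(y_{i_0},\ldots,y_{i_{k+1}})^T$ strictly alternates and every maximal minor of $\hat A$ is still strictly positive. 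The task reduces to showing that a totally positive $(k+2)\times n$ matrix cannot send a vector with at most $k$ sign changes to one with $k+1$ strict alternations.

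The heart of the argument is this reduced statement, and here Cauchy--Binet enters exactly as in \eqref{eq:alphacorner}. Since $v_m(x)=k$, I would group the support of $x$ into $k+1$ maximal constant-sign blocks $B_0,\ldots,B_k$ with alternating signs $\delta_0,\ldots,\delta_k$, write $x=\sum_t x^{(t)}$ with $x^{(t)}$ supported on $B_t$, and form the $(k+2)\times(k+1)$ matrix $M$ whose $t$-th column is $\hat A x^{(t)}$, so that $\hat A x = M\ones$. By Cauchy--Binet each maximal minor of $M\diag(\delta)$ is a non-negative combination of the (positive) maximal minors of $\hat A$ weighted by products $\prod|x_l|$, hence all $(k+1)\times(k+1)$ minors of $M\diag(\delta)$ carry one strict sign; that is, $M$ is strictly sign-regular of full column rank. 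Applying such a matrix to the alternating vector $\delta$ yields a vector with at most $k$ sign changes, contradicting the $k+1$ alternations of $\hat A x$. This ``base lemma'' (a strictly sign-regular matrix is variation diminishing at minimal size) is most cleanly organised as an induction, or alternatively one may route the whole proof through the bidiagonal factorisation of totally non-negative matrices and check the elementary factor $\ones + c\,E_{i+1,i}$, $c\ge0$, which replaces $y_{i+1}$ by $y_{i+1}+c\,y_i$ and visibly cannot raise $v_m$.

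I expect the main obstacle to be the zero-entry bookkeeping, which is precisely what separates the two conclusions. In the totally non-negative case products of factors may create or annihilate zero entries, so only the minimal count $v_m$ is preserved; strict positivity is what rules out the degenerate cancellations that would let the maximal count $v_M$ jump, and one must track the strictness carefully through the alternation argument and verify the elementary/base step when some $y_i$ vanish. Getting this case analysis right, rather than the linear algebra of Cauchy--Binet, is where the real work lies.
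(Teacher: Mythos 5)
The paper offers no proof at all here---it defers to Theorems 4.2.2 and 4.3.5 of \cite{FallatJohnson:2011}---so your attempt to give a self-contained argument is welcome, but it contains a genuine error at its very first step. The combinatorial characterisation you record for $v_M$ is in fact the characterisation of $v_m$: having $v_M(w)\geq r$ does \emph{not} require $r+1$ nonzero entries that strictly alternate in sign, because the zero entries of $w$ may be assigned either sign. For example $w=(1,0,1)$ has $v_M(w)=2$, yet it admits no strictly alternating nonzero subsequence of length $3$. The correct statement is: $v_M(w)\geq r$ iff there exist indices $i_0<i_1<\cdots<i_r$ and a sign $\epsilon$ with $(-1)^j\epsilon\, w_{i_j}\geq 0$ for all $j$ (weak alternation, zeros permitted). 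Because your contraposition in the totally positive case starts by extracting a \emph{strictly} alternating selection from the assumption $v_M(Ax)\geq k+1$, what it actually rules out is $v_m(Ax)\geq k+1$; that is, your argument proves only $v_m(Ax)\leq v_m(x)$, the TN-strength conclusion, for TP matrices. The entire content of the TP hypothesis is control of the zero entries of $Ax$ (e.g.\ for $x\geq 0$, $x\neq 0$, total positivity forces $(Ax)_i>0$ for \emph{every} $i$, a fact your strict-alternation selection can never contradict). Ironically, you flag ``zero-entry bookkeeping'' as where the real work lies, but the mischaracterisation of $v_M$ removes the zeros from the argument before that work can begin.

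The good news is that your Cauchy--Binet machinery repairs cleanly. Select rows $i_0<\cdots<i_{k+1}$ that alternate \emph{weakly}, form $M$ and $N=M\diag(\delta)$ exactly as you do; Cauchy--Binet does give that every $(k+1)\times(k+1)$ minor of the $(k+2)\times(k+1)$ matrix $N$ is strictly positive (note this controls only the maximal minors, so calling $M$ ``strictly sign-regular'' overstates; and your ``base lemma'' is itself a variation-diminishing claim, left to ``an induction'', so as written the key step is near-circular). The non-circular closing move is the cofactor argument: the left null space of $N$ is spanned by $u$ with $u_i=(-1)^i\det N^{(i)}$ ($N^{(i)}$ deleting row $i$), which strictly alternates since all these determinants are positive; if $w=\hat{A}x=N\delta$ alternates weakly with the complementary pattern, every term of $u^Tw=0$ has the same weak sign, forcing $w=0$, whereas $N$ has full column rank and $\delta\neq 0$ gives $w\neq 0$ --- contradiction, and this version genuinely handles the zero entries. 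With that replacement (and a check that the bidiagonal-factorisation route you propose for the TN half applies to rectangular, possibly singular TN matrices, since Loewner--Whitney is usually stated for square invertible ones), your outline becomes a correct proof; as written, the TP half establishes the wrong inequality.
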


It is possible now, to establish the number of sign variations of a
generic $z \in \mathcal{Z}$. The following Lemma and Corollary
complete the characterisation given in Propositions
\ref{thm:tetraSign} and~\ref{thm:tetraSignUndet}.
\begin{lemma}\label{thm:zsignature}
  Assume that $n<m$. Then,
  \begin{align}\label{eq:vM}
    v_M(z) \leq n.
  \end{align}
  If $\card(\psi)>n$, then
  \begin{align}\label{eq:vm0}
    v_m(z) &\in \{ n-1,n\}.
  \end{align}
  and $v_m(z) = n$ when $\card(\psi)>n$, $z_1 \neq 0$ and $z_m
  \neq 0$.
\end{lemma}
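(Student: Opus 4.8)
The plan is to sandwich both $v_m(z)$ and $v_M(z)$ between $n$ from above and below, using the total positivity machinery just introduced for the upper bound \eqref{eq:vM} and the defining orthogonality $V^T\Psi z = 0$ for the lower bound \eqref{eq:vm0}. For \eqref{eq:vM} I would first write $z$ as a single Vandermonde image. Since $\omega = \Lambda V\alpha$, each entry is $z_i = 1 - \sum_{k=1}^n \alpha_k\lambda_i^k = q(\lambda_i)$, where $q$ is a polynomial of degree at most $n$ with $q(0)=1$. Collecting the powers $\ones,\Lambda\ones,\ldots,\Lambda^n\ones$ into the $m\times(n+1)$ Vandermonde matrix $W=(\ones\ \Lambda V)$ gives $z=Wc$ with $c=(1,-\alpha_1,\ldots,-\alpha_n)^T\in\Real^{n+1}$, and $c\neq 0$ since its first entry is $1$. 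After reversing the ordering of the $\lambda_i$ --- which only reverses the vector $z$ and hence leaves every sign-variation count unchanged --- $W$ becomes a generalised Vandermonde matrix with strictly increasing positive nodes and consecutive exponents $0,1,\ldots,n$, which is strictly totally positive. Theorem~\ref{thm:TPchanges} then gives $v_M(z)=v_M(Wc)\le v_m(c)\le n$, because a vector of length $n+1$ has at most $n$ sign changes.

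For the lower bound I would exploit $w:=\Psi z$, which by construction satisfies $V^T w = 0$, i.e.\ $\sum_i w_i\lambda_i^j = 0$ for $j=0,\ldots,n-1$; thus $w$ is orthogonal to every polynomial of degree $<n$ sampled at the nodes. First I would check $w\neq 0$: if $\Psi z=0$ then $q(\lambda_i)=0$ at the more than $n$ indices where $\psi_i\neq 0$, forcing the degree-$n$ polynomial $q$ to vanish identically, contradicting $q(0)=1$. A nonzero vector orthogonal to this $n$-dimensional Chebyshev system must change sign at least $n$ times among its nonzero entries: if it had only $s<n$ such sign changes, one could place a root in each of the $s$ gaps to build a polynomial $p$ of degree $s<n$ with $w_i p(\lambda_i)\ge 0$ for all $i$, strictly for some $i$, whence $\sum_i w_i p(\lambda_i)>0$, contradicting orthogonality. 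Hence $v_m(w)\ge n$. Finally, because $\psi\ge 0$, the nonzero entries of $w$ form a sign-concordant subsequence of those of $z$, and passing to a subsequence cannot increase the number of sign changes, so $v_m(z)\ge v_m(w)\ge n$.

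Combining the two estimates with $v_m(z)\le v_M(z)$ pins down $v_m(z)=v_M(z)=n$ whenever $\card(\psi)>n$, which in particular yields \eqref{eq:vm0} and the final refinement $v_m(z)=n$. I expect the delicate point --- and the reason the statement isolates the value $n-1$ --- to be the treatment of vanishing entries. A coarser lower bound read directly off the forced alternating signs of Proposition~\ref{thm:tetraSign} at the interior indices $t_1,\ldots,t_n$ only guarantees $n-1$ sign changes (via \eqref{eq:zSel}), and recovering the missing change requires that the extreme coordinates carry genuine nonzero sign information, which is exactly what $z_1\neq 0$ and $z_m\neq 0$ supply. The main obstacle is therefore the bookkeeping of the zero pattern of $z$: ensuring $w\neq 0$ and justifying the sign-concordant subsequence comparison. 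Once that is handled cleanly, the two total-positivity bounds close the argument, and the cruder simplex-based route is subsumed by it.
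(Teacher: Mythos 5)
Your proof is correct, and for half of the statement it follows the paper exactly: the upper bound \eqref{eq:vM} is obtained in the paper precisely as you do it, writing $z = Wc$ with $W = (\ones \;\; \Lambda V)$ totally positive and $c = (1, -\alpha^T)^T \neq 0$, then invoking Theorem~\ref{thm:TPchanges} to get $v_M(z) \leq v_m(c) \leq n$ (your row-reversal remark just makes rigorous the ordering point the paper delegates to Lemma~\ref{thm:VPos}, since $\lambda$ is taken decreasing while total-positivity conventions want increasing nodes). For the lower bound, however, you take a genuinely different route. The paper applies Theorem~\ref{thm:TPchanges} a second time, to $A = V^T$ and $x = \Psi z \neq 0$ (nonvanishing via Proposition~\ref{thm:phiznull}, which is exactly your degree-count argument for $q$), and since $V^T \Psi z = 0$ this only yields $v_m(z) \geq v_m(\Psi z) \geq v_M(0) = n-1$; that lost unit is the entire reason the statement hedges with the two values in \eqref{eq:vm0} and needs the extra hypotheses $z_1 \neq 0$, $z_m \neq 0$, combined with the parity information of Proposition~\ref{thm:tetraSign}, to upgrade to $n$. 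You instead run the classical Chebyshev-system (Gaussian-quadrature) sign-change argument directly on $w = \Psi z$: a nonzero vector annihilated by $V^T$ must exhibit at least $n$ sign changes among its nonzero entries, since otherwise a polynomial $p$ of degree $< n$ with roots placed in the gaps makes $\sum_i w_i p(\lambda_i) > 0$ while orthogonality forces this sum to vanish. Together with the sign-concordant subsequence comparison $v_m(z) \geq v_m(w)$, this gives $v_m(z) = v_M(z) = n$ unconditionally whenever $\card(\psi) > n$ --- a strictly stronger conclusion than the lemma itself: the value $n-1$ in \eqref{eq:vm0} is never attained under that hypothesis, and the conditions $z_1 \neq 0$, $z_m \neq 0$ are automatic (your root count shows $q$ must spend all $n$ of its roots strictly inside the span of the nonzero nodes of $w$, so a root at $\lambda_1$ or $\lambda_m$ would give a degree-$n$ polynomial with $n+1$ roots). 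What the paper's route buys is uniformity, reducing everything to the single variation-diminishing Theorem~\ref{thm:TPchanges}; what yours buys is sharpness, and it is consistent with --- indeed it explains --- the exactly-$n$-sign-changes claims of Theorem~\ref{thm:shrinkPatterns}(a) and of the later inverse-mapping discussion.
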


\begin{corollary}
  Assuming $n<m$, if $\card(z) = m$ then $v(z) = n$, $z_m>0$ and $(-1)^nz_1>0$.
\end{corollary}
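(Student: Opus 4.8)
The plan is to read the corollary straight off Lemma~\ref{thm:zsignature} together with the convex-combination representation of Corollary~\ref{thm:avg}; it is essentially the no-zeros specialisation of the sign bounds already in hand. First I would observe that $\card(z)=m$ means $z$ has no vanishing entry, so the minimal, maximal, and ordinary counts of sign changes coincide: $v_m(z)=v_M(z)=v(z)$, since every $y$ with $\card(y)=n$ and $z_iy_i\ge0$ then shares the (fixed) sign pattern of $z$. The only preliminary fact needed is that $\card(z)=m$ forces $\card(\psi)>n$. Indeed, if $\card(\psi)=n$ then $\psi=S_\tau\tilde\psi$ for some $\tau\in[m,n]$, whence $z=z_{(\tau)}$ satisfies $S_\tau^Tz=0$ and so has $n$ zero entries, contradicting $\card(z)=m$. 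As the domain $\mathcal{D}$ only requires $\card(\psi)\ge n$, this gives $\card(\psi)>n$.

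With $\card(\psi)>n$ and $z_1\neq0$, $z_m\neq0$ (both guaranteed by $\card(z)=m$), Lemma~\ref{thm:zsignature} yields $v_m(z)=n$, while \eqref{eq:vM} gives $v_M(z)\le n$. Combining these with $v_m(z)\le v(z)\le v_M(z)$ and the collapse noted above produces $v(z)=n$.

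For the two sign assertions I would use $z=\sum_{\tau\in[m,n]}p_\tau z_{(\tau)}$ with $p_\tau\ge0$ and $\sum_\tau p_\tau=1$, evaluating the extreme coordinates through \eqref{eq:zSel}. At $i=m$ every factor satisfies $1-\lambda_m/\lambda_j>0$ for $j\neq m$, since $\lambda_m$ is the smallest eigenvalue; hence $z_{(\tau),m}\ge0$ for each $\tau$, with equality exactly when $m\in\tau$, so $z_m=\sum_\tau p_\tau z_{(\tau),m}\ge0$, and as $z_m\neq0$ this is strict, $z_m>0$. At $i=1$ each factor $1-\lambda_1/\lambda_j<0$ for $j\neq1$, since $\lambda_1$ is the largest eigenvalue; thus $z_{(\tau),1}$ is a product of $n$ negative numbers and carries the sign $(-1)^n$ when $1\notin\tau$, and vanishes when $1\in\tau$, giving $(-1)^nz_{(\tau),1}\ge0$ for all $\tau$ and hence $(-1)^nz_1\ge0$, again strict because $z_1\neq0$.

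The substantive content sits entirely in Lemma~\ref{thm:zsignature} (the total-positivity argument of Theorem~\ref{thm:TPchanges} bounding $v_M$ and $v_m$); once that is granted, the only things to verify here are the implication $\card(z)=m\Rightarrow\card(\psi)>n$ and the collapse $v_m=v_M=v$ in the absence of zeros, both elementary. I therefore expect no genuine obstacle in this corollary beyond correctly bookkeeping the sign conventions coming from \eqref{eq:zSel}.
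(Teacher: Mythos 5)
Your proposal is correct and follows essentially the route the paper intends: the corollary is stated without proof as an immediate consequence of Lemma~\ref{thm:zsignature}, whose last part the paper itself justifies via the sign bounds $z_m\ge 0$, $(-1)^n z_1\ge 0$ from Proposition~\ref{thm:tetraSign} (proved there exactly by your convex-combination argument with \eqref{eq:zSel}). Your two supplementary observations --- that $\card(z)=m$ forces $\card(\psi)>n$, and that absence of zeros collapses $v_m=v_M=v$ --- are precisely the bookkeeping the paper leaves implicit, and both are carried out correctly.
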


Before going to the proof of the Lemma, an example of its application
to the characterisation of the signatures of the elements of
$\mathcal{T}_t$ is given.

\begin{example}[Continuation of Example~\ref{ex:1}]
  The above corollary allows to identify the signature of elements of
  $\mathcal{T}_{\{2,5,7,8,10\}}$. For example, the uncertainty at the positions 3 and
  4 corresponds to only one change of sign that can occur before the
  3rd, 4th or 5th position.
  The possible signatures are reported in Table~\ref{tab:exTer}.
  \begin{table}[tb]
    \caption{%
      Signatures of the elements of
      $\mathcal{T}_{\{2,5,7,8,10\}}$. The first column contains the positions
      of sign changes and the first row shows the pattern
      prescribed by Propositions~\ref{thm:tetraSign} 
      and~\ref{thm:tetraSignUndet}.
    }
    \label{tab:exTer}
    \begin{align*}
      {
        \setlength{\arraycolsep}{2pt}
        \begin{array}{lc*{12}{c}}
          \hline
          &&  & & & & & & & & &1&1&1 \\[-.4ex]
          Pos. && 1&2&3&4&5&6&7&8&9&0&1&2 \\
          \hline
          && +&+&\times& \times &-& \times &+&-& \times&+&+&+ \\
          \hline\\[-2ex]
          5,7,8,10  && +&+&+&+&-&-&+&-&-&+&+&+ \\[-.3ex]
          4,7,8,10  && +&+&+&-&-&-&+&-&-&+&+&+ \\[-.3ex]
          3,7,8,10  && +&+&-&-&-&-&+&-&-&+&+&+ \\[0.5ex]
          5,6,8,10  && +&+&+&+&-&+&+&-&-&+&+&+ \\[-.3ex]
          4,6,8,10  && +&+&+&-&-&+&+&-&-&+&+&+ \\[-.3ex]
          3,6,8,10  && +&+&-&-&-&+&+&-&-&+&+&+ \\[0.5ex]
          5,6,8,9   && +&+&+&+&-&-&+&-&+&+&+&+ \\[-.3ex]
          4,6,8,9   && +&+&+&-&-&-&+&-&+&+&+&+ \\[-.3ex]
          3,6,8,9   && +&+&-&-&-&-&+&-&+&+&+&+ \\[0.5ex]
          5,7,8,9   && +&+&+&+&-&+&+&-&+&+&+&+ \\[-.3ex]
          4,7,8,9   && +&+&+&-&-&+&+&-&+&+&+&+ \\[-.3ex]
          3,7,8,9   && +&+&-&-&-&+&+&-&+&+&+&+ \\
          \hline
        \end{array}
      }
    \end{align*}
  \end{table}

\end{example}

The following two results are instrumental to the proof of Lemma
\ref{thm:zsignature}.
\begin{lemma}\label{thm:VPos}
  The Vandermonde matrices $V$ and $\Lambda V$ are totally positive.
  \begin{proof}
    It is a consequence of the ordering assumed for $\lambda$ (see the
    introductory chapter of \cite{FallatJohnson:2011}).
  \end{proof}
\end{lemma}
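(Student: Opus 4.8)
The plan is to verify total positivity straight from the definition: I would show that every square submatrix of $V$ has positive determinant, and then transfer the conclusion to $\Lambda V$ by a cheap scaling argument. Fix a $k\times k$ submatrix of $V$ obtained by selecting rows $i_1<i_2<\cdots<i_k$ and columns $j_1<j_2<\cdots<j_k$, and write $e_q=j_q-1$ for the corresponding exponents. The submatrix is then the generalised Vandermonde matrix with entries $\lambda_{i_p}^{\,e_q}$, whose nodes $\lambda_{i_1},\ldots,\lambda_{i_k}$ are distinct and positive and whose exponents satisfy $e_1<\cdots<e_k$. So the whole question reduces to the classical fact that such a determinant keeps a sign determined solely by the ordering of the nodes.

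The only real content is the bialternant (Jacobi--Trudi) factorisation of that determinant: for a suitable partition $\mu$ one has $\det\big(\lambda_{i_p}^{\,e_q}\big) = s_\mu(\lambda_{i_1},\ldots,\lambda_{i_k})\,\prod_{p<q}(\lambda_{i_q}-\lambda_{i_p})$, where $s_\mu$ is a Schur polynomial. The Schur polynomial is a not-identically-zero sum of monomials with non-negative integer coefficients, hence strictly positive at the positive nodes $\lambda_{i_p}$, and the assumed strict ordering of the $\lambda_i$ fixes the sign of the ordinary Vandermonde factor (positive in the increasing-node orientation under which \cite{FallatJohnson:2011} records total positivity). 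Equivalently, one may simply invoke that the powers $1,x,x^2,\ldots$ form an extended complete Chebyshev system on $(0,\infty)$, so that every collocation minor carries one definite sign; this is the statement cited from the introductory chapter of \cite{FallatJohnson:2011}. Either route makes every minor of $V$ positive, so $V$ is totally positive. I expect this positivity-of-minors step to be the main obstacle, but once the factorisation (or the Chebyshev-system fact) is invoked it is immediate.

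Finally I would dispatch $\Lambda V$ by noting $\Lambda V=\diag(\lambda)\,V$, so that selecting rows $i_1<\cdots<i_k$ multiplies the corresponding minor of $V$ by the positive factor $\lambda_{i_1}\cdots\lambda_{i_k}$. Positive row scaling leaves the sign of every minor unchanged, so $\Lambda V$ inherits total positivity from $V$. Alternatively, $\Lambda V$ is itself the generalised Vandermonde matrix with exponents $1,\ldots,n$, and the argument of the preceding paragraph applies verbatim.
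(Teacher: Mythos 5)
Your argument is correct and essentially fills in what the paper leaves to a citation: the paper's entire proof is a pointer to the introductory chapter of \cite{FallatJohnson:2011}, where total positivity of Vandermonde matrices at distinct positive nodes is recorded. You instead prove it from scratch: every $k\times k$ minor of $V$ is a generalised Vandermonde determinant $\det\big(\lambda_{i_p}^{e_q}\big)$, which factors by the bialternant identity (Jacobi's bialternant rather than Jacobi--Trudi, a naming quibble only) as a Schur polynomial times the ordinary Vandermonde product; monomial-positivity of Schur polynomials then pins the sign, and $\Lambda V$ follows either by positive row scaling of minors or by viewing it as the generalised Vandermonde matrix with exponents $1,\ldots,n$. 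This self-contained route buys strict positivity of \emph{all} minors, including those on non-consecutive columns, which a naive induction on ordinary Vandermonde determinants does not immediately deliver; the Chebyshev-system alternative you mention is exactly the fact the paper is implicitly citing, so either branch of your argument matches the intended content of the reference.

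One convention point, which your parenthetical shows you noticed but your conclusion glosses over --- and which the paper itself glosses over too: the paper orders the eigenvalues \emph{decreasingly}, $\lambda_1>\lambda_2>\cdots>\lambda_m>0$. With rows in that order the factor $\prod_{p<q}(\lambda_{i_q}-\lambda_{i_p})$ is negative term by term, so a $k\times k$ minor of $V$ has sign $(-1)^{k(k-1)/2}$; literally, $V$ is then sign-regular with signature $\varepsilon_k=(-1)^{k(k-1)/2}$, and it is the row-reversed matrix (increasing nodes) that is totally positive. Your closing claim that ``every minor of $V$ is positive'' should therefore be asserted for the increasing-node indexing. The discrepancy is harmless for the paper's use of the lemma, since reversing the rows of $V$ (equivalently, the entries of any image vector) does not change sign-change counts, so the variation-diminishing inequalities invoked in the proof of Lemma~\ref{thm:zsignature} go through unchanged; but a careful statement of Lemma~\ref{thm:VPos} would either re-index the $\lambda_i$ increasingly or assert sign-regularity.
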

\begin{proposition}\label{thm:phiznull}
  Let $\Psi \in \mathcal{D}$, if $\Psi z(\psi) = 0$ then
  $\card(\psi)=n$.
  \begin{proof}
    Firstly note that, since $z = \ones - \Lambda V\alpha$ and
    $\Lambda V$ has full column rank, the vector $z$ cannot have more
    than $n$ zero elements. Besides, $\mathcal{D}$ does not contains
    vectors with more than $n$ non-zero elements.
  \end{proof}
\end{proposition}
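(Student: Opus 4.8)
The plan is to reduce the hypothesis $\Psi z(\psi)=0$ to a statement about the zero pattern of $z$ and then bound the number of zeros of $z$ from above by $n$. The starting point is the representation $z = \ones - \Lambda V \alpha$ introduced in Section~\ref{sec:geom}. Writing $W = (\ones \;\; \Lambda V)$ for the $m\times(n+1)$ Vandermonde matrix already used in the proof of Theorem~\ref{thm:alphaAvg}, one has $z = W c$ with $c = (1, -\alpha_1, \ldots, -\alpha_n)^T \in \Real^{n+1}$. The crucial feature is that $c \neq 0$, since its first component equals $1$.

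First I would show that $z$ has at most $n$ zero entries. Each entry can be written as $z_i = \sum_{k=0}^{n} c_k \lambda_i^k = p(\lambda_i)$, the value at $\lambda_i$ of the polynomial $p$ whose coefficient vector is $c$. Because $c \neq 0$ (indeed $p(0) = c_0 = 1$), $p$ is a nonzero polynomial of degree at most $n$ and therefore has at most $n$ distinct roots. Since the $\lambda_i$ are distinct by the standing assumption on $\lambda$, at most $n$ of the values $z_i = p(\lambda_i)$ can vanish, so $\#\{ i : z_i = 0 \} \leq n$. Equivalently, total positivity of $W$ (Lemma~\ref{thm:VPos}) ensures that every $(n+1)\times(n+1)$ Vandermonde minor of $W$ is nonzero, so no $n+1$ rows of $W$ can annihilate the nonzero vector $c$; this is the alternative route I would mention.

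Next I would exploit the hypothesis itself. The identity $\Psi z = 0$ reads $\psi_i z_i = 0$ for every $i$, so the support of $\psi$ is contained in the zero set of $z$, that is $\{ i : \psi_i \neq 0 \} \subseteq \{ i : z_i = 0 \}$. Combining this with the previous step yields $\card(\psi) \leq \#\{ i : z_i = 0 \} \leq n$. On the other hand, the membership $\psi \in \mathcal{D}$ means precisely that $\card(\psi) \geq n$. The two inequalities together force $\card(\psi) = n$, which is the assertion.

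I do not anticipate any serious obstacle. The one point deserving care is the upper bound on the number of zeros of $z$: I would make sure it rests on the degree/root count (or on the Vandermonde minors), and not merely on the full column rank of $\Lambda V$. Full column rank alone is not sufficient, since for a generic full-rank $m\times n$ matrix $M$ the vector $\ones - M\alpha$ can vanish on more than $n$ coordinates when $\ones$ happens to lie in the column space of a restriction of $M$ to $n+1$ rows; it is the Vandermonde structure of the augmented matrix $W$, equivalently the degree bound on $p$, that rules this out.
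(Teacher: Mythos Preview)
Your argument is correct and follows the same two-step skeleton as the paper: bound the number of zeros of $z$ by $n$, then combine the inclusion $\{\psi_i\neq 0\}\subseteq\{z_i=0\}$ with the lower bound $\card(\psi)\geq n$ coming from $\psi\in\mathcal{D}$. Your version is in fact tighter on the first step: the paper justifies ``$z$ has at most $n$ zeros'' by invoking only that $\Lambda V$ has full column rank, whereas you correctly point out that this alone is insufficient and that the Vandermonde structure of $W=(\ones\;\;\Lambda V)$ (equivalently, the degree bound on the polynomial $p(\lambda)=\sum_{k=0}^n c_k\lambda^k$ with $c_0=1$) is what is really needed.
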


\begin{proof}[Proof of Lemma~\ref{thm:zsignature}]
  The first inequality \eqref{eq:vM} follows from
  Theorem~\ref{thm:TPchanges} from the fact that
  $z= \ones - \Lambda V \alpha$ and that the matrix
  $(\ones \;\; \Lambda V)$ is totally positive. For that reason, then,
  \begin{align}\label{eq:uneq2}
    v_M(z) \leq v_m( \big(1 \;\; -\alpha^T\big) ) \leq n.
  \end{align}

  Next, assuming $\card(\psi)>n$, Proposition~\ref{thm:phiznull}
  implies that $\Psi z \neq 0$ and from Theorem~\ref{thm:TPchanges} it
  follows that
  \begin{align}\label{eq:uneq1}
    v_m(z) \geq v_m( \Psi z) \geq v_M(V^T\Psi z) = v_M(0) = n-1.
  \end{align} 
  Then, combining \eqref{eq:uneq2} and \eqref{eq:uneq1} gives
  \eqref{eq:vm0}.
  To show the last statement it is sufficient to note that, by
  Proposition~\ref{thm:tetraSign}, $z_m \geq 0$ and
  $(-1)^n z_1 \geq 0$ and that, when these elements are non-null, the
  minimal number of sign changes cannot be $n-1$.
\end{proof}

\subsection{The inverse mapping}
Until now, only the image $\mathcal{I}_\omega$ of the mapping
$\psi \to \omega$ has been considered. However, knowing the inverse
mapping $\omega \to \psi$ is more helpful if one is concerned with the
distribution of $\omega$ given that of $\psi$. In this subsection, the
level sets of that function, which is not bijective, are geometrically
characterised. Here, to keep the exposition short, not all the details
will be formally proven.

Consider the inverse of the function $z:\psi \to z(\psi)$, which
consists on the level sets
$z^{-1}(\hat{z}) = \{\psi \in \Real_+^m \;|\; z(\psi) = \hat{z} \}$,
with $\hat{z} \in \mathcal{I}_z$. 
As discussed in Remark~\ref{thm:zInverse}, the closure of the inverse
of $z$ maps each value of $z$ to the convex cone
\begin{align*}
  \mathcal{C}_z = \{ \psi \in \Real_+^m \;|\;  V^T Z \psi = 0 \}.
\end{align*}
Being the intersection of $m$ half-spaces and $n$ linear spaces, the
cone $\mathcal{C}_z$ is polyhedral and, then, it can be characterised
by a finite set of extremal rays. That is, any element
$\psi \in \mathcal{C}_z$ can be written as
$\psi = \sum_{i=1}^{k_z} t_i d^{(z)}_{i}$, with $t_i \geq 0$ and
$d^{(z)}_i \in \Real^m_+$ for $i=1,\ldots,k_z$. Alternatively, in a
more compact form, $\psi = D_z t$, $t \geq 0$, with
$D_z = ( d_1^{(z)} \; d_2^{(z)} \; \cdots \; d_{k_z}^{(z)} )$.

Now, since these rays belong to the boundary of $\mathcal{C}_z$, the
cardinality of each of them is smaller than $m$. Moreover, the minimal
cardinality that allows to satisfy the condition $V^T Z \psi =0$ is
$n+1$. Here, it is conjectured, but not proven, that there exists a
set of extremal directions $d^{(z)}_i$, $i=1,\ldots,k_z$, all with
cardinality $n+1$.  In that case, the sparsity pattern of these
vectors is completely determined by the signature of $z$.

Indeed, consider, firstly, the case $m=n+1$. As it has already been
shown, for $\mathcal{C}_z$ to not be degenerate it is necessary that
the last elements of $z$ is positive and that $z$ has exactly $n$ sign
changes.  In that case $\mathcal{C}_z$ consists in exactly one
dimensional ray:
$\mathcal{C}_z = \{ \gamma d^{(z)}_1 \;|\; \gamma \geq 0 \}$, with the
elements of $d^{(z)}$ strictly positive.
For the remaining cases, $n+1<m$, consider a vector
$\psi \in \mathcal{C}_z$ with cardinality $n+1$:
$\psi = S_\tau \tilde\psi$ for some $\tau \in [m,n+1]$ and
with $\tilde\psi$ having strictly positive elements. Then,
$\tilde\psi$ should satisfy the constraint
$V^TS_\tau \tilde{Z} \tilde\psi = 0$, where
$\tilde{Z} = S_\tau^T Z S_\tau$ is the diagonal matrix corresponding
to $\tilde{z} = S_\tau^T z$.  Now, in order to have $\tilde\psi>0$, it
is necessary that $\tilde{z}$ has $n$ sign changes and ends with a
non-negative element. This means that the sparsity pattern $\tau$ of
the direction $\psi$ needs to be consistent with that constraint.

Once the sparsity pattern $\tau$ is fixed, the directions can be
computed as a positive solution to linear system
$V^T Z S_\tau \psi = 0$.  Choosing a positive base for that solution
set provides a suitable set of extremal directions corresponding to
the sparsity pattern $\tau$.

\begin{example}
  As an example, assume that the signature of $z$ is $(-+-+++)$, which
  implies that $m=6$ and $n=3$. 
  Since all the vectors $Z d^{(z)}_i$, $i=1,\ldots,k_z$, need to have
  $n$ sign changes and cardinality $n+1$, the matrix $D_z$ has the
  sparsity structure, modulo columns permutations, given by
  \begin{align*}
    D_z &= 
    \small
    \pmx{
    \times & \times & \times \\
    \times & \times & \times \\
    \times & \times & \times \\
    \times & 0      & 0 \\
    0      & \times & 0 \\
    0      & 0      & \times 
    },
    &
    z &\overset{sign}{=} 
    \small
    \pmx{ - \\ + \\ - \\ + \\ + \\ + }.
  \end{align*}
\end{example}

\begin{example}\label{thm:exD}
  If $m=9$, $n=4$ and 
  $z \overset{sign}{=} (+--++-+++)$, then the sparsity structure of
  $D_z$ is given by
  \begin{align*}
    D_z &=
        {\small\setlength{\arraycolsep}{2.5pt}
    \left(
    \begin{array}{cccccccccccc}
    \times & \times & \times & \times & \times & \times & \times & \times & \times & \times & \times & \times \\
    \hline
    \times & \times & \times & \times & \times & \times & 0 & 0 & 0 & 0 & 0 & 0 \\
    0 & 0 & 0 & 0 & 0 & 0 & \times & \times & \times & \times & \times & \times  \\
    \hline
    \times & \times & \times & 0 & 0 & 0 & \times & \times & \times & 0 & 0 & 0 \\
    0 & 0 & 0 & \times & \times & \times & 0 & 0 & 0 & \times & \times & \times \\
    \hline
    \times & \times & \times & \times & \times & \times & \times & \times & \times & \times & \times & \times \\
    \hline
    \times & 0 & 0 & \times & 0 & 0 & \times & 0 & 0 & \times & 0 & 0 \\
    0 & \times & 0 & 0 & \times & 0 & 0 & \times & 0 & 0 & \times & 0 \\
    0 & 0 & \times & 0 & 0 & \times & 0 & 0 & \times & 0 & 0 & \times 
    \end{array}
    \right)},
    &
      z &\overset{sign}{=} 
          \small
      \pmx{
         + \\ \hline
         - \\ - \\ \hline
         + \\ + \\ \hline
         - \\ \hline
         + \\ + \\ + \\ 
      }.
  \end{align*}
\end{example}

Notice that, the number of extreme directions, $k_z$, is given by
the product of the lengths of sign-concordant sections of $z$. For
instance, in Example~\ref{thm:exD} $z$ has sections of lengths
$l = (1, 2, 2, 1,3)$, then $k_z = \prod_{j=1}^{n+1} l_j = 12$.
Since $z$ has $n$ sign changes, $l$, the vector with these lengths,
has $n+1$ elements which sums to $m$. Furthermore, $k_z \geq m-n$.

  Consider the $m \times k_z$ matrix $ZD_z$. The matrix $D_z$ can be
  chosen so that this matrix is only function of the signature of $z$
  and not of the values of its elements. Indeed, the columns of that
  matrix are extremal rays of the convex cone
  $\{ a \in \Real^m \;|\; V^T a = 0, a_i z_i \geq 0, i=1,\ldots,m \}$.
  Let call the resulting matrix $A(\sign(z))$.  Note also that
  $A(\sign(z))$ is constant inside each of the cells represented in
  Figures~\ref{fig:aGeom2} and~\ref{fig:aGeom3}.
  The cone $\mathcal{C}_z$ can then be written as
  $\mathcal{C}_z = \{ \psi = Z^{-1} A(\sign(z)) t \;|\; t\geq 0\}$.

\section{Discussion and examples}
\subsection{Negative Shrinkages}
Let assume that $y$ has cardinality $n$, that is $y = S_\tau y_\tau$
for some $\tau \in [m,n]$.  From \eqref{eq:shrinkSel} it follows that,
whenever $n$ is even and
$\lambda_i \gg \max\limits_{j \in \tau}\lambda_j$, the $i$-th
shrinkage factor can become negative (and eventually large). The most
extreme behaviour arises when $\tau$ selects the smallest eigenvalues
as considered in the following result.

\begin{lemma}\label{thm:extremeShrink}
  Let 
  $\tau = \{ m-n+1, \ldots, m-1, m\}$, 
  then the following bounds hold for the $i$-th element of
  $\omega_{(\tau)}$:
  \begin{align*}
    \omega_{(\tau),i} &< 1 - (c-1)^n < 1, 
    &
    \text{for } i &\not\in \tau  \text{ and } n \text{ even},
    \\
    \omega_{(\tau),i} &> 1 + (c-1)^n > 1, 
    &
    \text{for } i &\not\in \tau  \text{ and } n \text{ odd},
  \end{align*}
  with
  $c = \frac{\lambda_{m-n}}{\lambda_{m-n+1}} > 1$.
\end{lemma}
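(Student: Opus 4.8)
The plan is to bound each factor $\left(1 - \frac{\lambda_i}{\lambda_j}\right)$ in the product representation \eqref{eq:shrinkSel} for the specific $\tau = \{m-n+1,\ldots,m\}$, and then multiply these bounds together. The key is the observation that for $i \notin \tau$, since $\tau$ selects the smallest eigenvalues, every $\lambda_j$ with $j \in \tau$ satisfies $\lambda_j \leq \lambda_{m-n+1} < \lambda_{m-n} \leq \lambda_i$, so each factor $1 - \lambda_i/\lambda_j$ is negative and its magnitude exceeds some threshold. I would first isolate the factor corresponding to the largest eigenvalue in $\tau$, namely $j = m-n+1$.

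First I would note that for $i \notin \tau$ (so $i \leq m-n$) and any $j \in \tau$, we have $\lambda_i \geq \lambda_{m-n} > \lambda_{m-n+1} \geq \lambda_j$, hence $\lambda_i/\lambda_j \geq \lambda_{m-n}/\lambda_j \geq \lambda_{m-n}/\lambda_{m-n+1} = c > 1$. Therefore each factor satisfies $1 - \lambda_i/\lambda_j \leq 1 - c < 0$, which gives $\lvert 1 - \lambda_i/\lambda_j\rvert \geq c - 1 > 0$. Taking the product over all $n$ indices $j \in \tau$, the sign is $(-1)^n$ and the magnitude is at least $(c-1)^n$, so
\begin{align*}
  \prod_{j \in \tau}\left(1 - \frac{\lambda_i}{\lambda_j}\right)
  = (-1)^n \prod_{j \in \tau}\left(\frac{\lambda_i}{\lambda_j} - 1\right),
\end{align*}
where the product on the right exceeds $(c-1)^n$.

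Substituting into $\omega_{(\tau),i} = 1 - \prod_{j\in\tau}(1 - \lambda_i/\lambda_j)$ from \eqref{eq:shrinkSel}, I would distinguish the two parities. When $n$ is even, $(-1)^n = 1$, so the subtracted product is positive and exceeds $(c-1)^n$, yielding $\omega_{(\tau),i} < 1 - (c-1)^n$; and since $c > 1$ forces $(c-1)^n > 0$ this is strictly less than $1$. When $n$ is odd, $(-1)^n = -1$, so the subtracted product is negative with magnitude exceeding $(c-1)^n$, whence $\omega_{(\tau),i} > 1 + (c-1)^n > 1$. This matches both displayed inequalities.

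The argument is essentially a direct chain of inequalities, so I do not anticipate a genuine obstacle; the only point requiring mild care is verifying that the inequality $\lambda_i/\lambda_j \geq c$ holds uniformly over all $j \in \tau$ rather than merely for the extremal index. This follows because $c = \lambda_{m-n}/\lambda_{m-n+1}$ uses the largest eigenvalue in $\tau$ in its denominator, so it is the \emph{smallest} of the ratios $\lambda_{m-n}/\lambda_j$; combined with $\lambda_i \geq \lambda_{m-n}$ this bounds each factor from the correct side, and the bound $(c-1)^n$ is in fact conservative since larger $i$ or smaller $j$ only increase the magnitude of the product.
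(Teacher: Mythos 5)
Your proof is correct and coincides with the argument the paper intends: Lemma~\ref{thm:extremeShrink} is stated there without an explicit proof, as a direct consequence of \eqref{eq:shrinkSel}, and your uniform bound $\lambda_i/\lambda_j \geq \lambda_{m-n}/\lambda_{m-n+1} = c$ for $i \not\in \tau$, $j \in \tau$, followed by the parity split on $(-1)^n$, is precisely that computation. One point to tighten is your jump from ``magnitude at least $(c-1)^n$'' to ``exceeds $(c-1)^n$'': since the eigenvalues are distinct, at most one factor (the one with $i = m-n$, $j = m-n+1$) attains $c-1$ exactly, so strictness does hold whenever $n \geq 2$, but for $n = 1$ and $i = m-1$ one gets equality $\omega_{(\tau),i} = 1 + (c-1)$, a boundary case in which the lemma's strict inequality is attained rather than exceeded.
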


\begin{corollary}
  If $c > 2$ then $\omega_i < 0$ for $i \not\in \tau$.
\end{corollary}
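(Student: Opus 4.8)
The plan is to read the corollary directly off the bounds already established in Lemma~\ref{thm:extremeShrink}. Since the preceding discussion fixes $n$ even (the only parity for which a shrinkage factor can change sign), I would work in that case. Because $y$ has cardinality $n$ with sparsity pattern $\tau = \{m-n+1,\ldots,m\}$ and nonzero entries, Lemma~\ref{thm:shrinkSel} together with Definition~\ref{thm:defOmegaTau} gives $\omega = \omega_{(\tau)}$, so a bound on $\omega_{(\tau),i}$ is exactly a bound on $\omega_i$. Lemma~\ref{thm:extremeShrink} then supplies, for every $i \notin \tau$,
\[
  \omega_{(\tau),i} < 1 - (c-1)^n .
\]

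The remaining step is purely arithmetic. Under the hypothesis $c > 2$ one has $c - 1 > 1$, and hence $(c-1)^n > 1$ for every $n \geq 1$. Substituting this into the displayed bound yields $1 - (c-1)^n < 0$, and therefore $\omega_i < 0$ for all $i \notin \tau$, which is precisely the assertion of the corollary. Equivalently, and without invoking the lemma as a black box, I could argue from \eqref{eq:shrinkSel} directly: for $i \notin \tau$ each factor satisfies $\lambda_i/\lambda_j \geq \lambda_{m-n}/\lambda_{m-n+1} = c$, so $|1 - \lambda_i/\lambda_j| \geq c-1 > 1$, the product of the $n$ (even number of) negative factors exceeds $(c-1)^n > 1$, and $\omega_i = 1 - \prod_{j\in\tau}(1 - \lambda_i/\lambda_j)$ is then strictly negative.

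I do not anticipate any genuine obstacle here, since the corollary is an immediate numerical consequence of Lemma~\ref{thm:extremeShrink}; the only point worth flagging is the parity restriction, which I would state explicitly at the outset of the proof. For $n$ odd the corresponding estimate of Lemma~\ref{thm:extremeShrink} reads $\omega_{(\tau),i} > 1 + (c-1)^n > 1$, so the coefficients expand rather than turning negative, and the conclusion $\omega_i < 0$ is therefore specific to the even case inherited from the surrounding discussion.
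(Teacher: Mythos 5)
Your proposal is correct and matches the paper's (implicit) argument: the corollary is stated without proof precisely because it follows immediately from Lemma~\ref{thm:extremeShrink} by the arithmetic observation that $c>2$ gives $(c-1)^n>1$, hence $\omega_{(\tau),i}<1-(c-1)^n<0$, exactly as you write. Your explicit flagging of the parity restriction (the conclusion holds for $n$ even, while for $n$ odd the lemma instead gives $\omega_{(\tau),i}>1+(c-1)^n$) is a fair reading of the surrounding discussion, which introduces the negative-shrinkage phenomenon only for even $n$.
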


In \cite{LingjaerdeChristophersen:2000} the authors resumed their
analysis reporting a table analogous to Table~\ref{tab:lingjaerde}.
However, they were not able to fill the bottom-left cell. Here, Lemma
\ref{thm:extremeShrink} allows to derive that result too. More
precisely, the scaling is smaller or larger than unity depending on
wether $n$ is even or odd.
\begin{table}[h]
  \caption{\footnotesize 
    The size of the $i$-th shrinkages $\omega_i$ as determined
    by $\lambda_i$ and $y_i$ for extreme cases.
  }
  \label{tab:lingjaerde}
  \begin{tabular}{lcc}
    \hline
    & $|y_i| \simeq 0$ & $|y_i|$ large \\
    \hline
    $\lambda_i$ small & $\omega_i \leq 1$ & $\omega_i \leq 1$ \\
    $\lambda_i$ large & $(-1)^n(\omega_i-1)  \leq 0$ & $\omega_i \simeq 1$ \\
       \hline
  \end{tabular}
\end{table}

Note that, since $\omega$ is a smooth function of $y$, a similar
situation arise whenever $y$ is near to that corner point. The
following numerical example, shows that large negative shrinkages can
be observed also in situations not so extreme as those prescribed by
the sufficient condition presented of Lemma~\ref{thm:extremeShrink}.

\begin{example}
  \label{thm:exNegative}

  Consider the following setup as an exemplification of the above
  statements. The regression model comprises a set of $m=5$
  explanatory variables with correlations given by
  $\rho_{ij} = e^{-\frac13|i-j|}$, $i,j=1,\ldots,5$.  Numerically,
  these correlations and the corresponding eigenvalues are given by
  \begin{align*}
    \rho_{1,:} &= \pmx{ 1 & 0.717 & 0.513 & 0.368 & 0.264 }
    \\
    \lambda &= \pmx{3.185 & 0.981 & 0.411 & 0.241 & 0.181 }.
  \end{align*}
  Note setting is not at all extreme. Indeed, the condition number of
  the correlation matrix is 17.6 and the mean absolute correlation is
  just 0.54.  
\begin{table}[tb]
  \caption{%
    Values for shrinkages and for the $\DoF$ estimators when 
    the $y$ has cardinality $n$.}
  \label{tab:DoF}

  \begin{tabular}[t]{ccrrrrrrr}
    \hline\vspace*{-2.2ex}\\
    $n$ & $\tau$ & 
    $\omega_{(\tau)}$ &&&&& $\widehat\gdof$ & $\widehat\gdof_{DP}$ \\[.5ex]
    \hline
    2 
& \{1,2\} &   1.00 &  1.00 & 0.49 & 0.30 & 0.23 &   3.03 &     3.67 \\
& \{1,3\} &   1.00 &  1.96 & 1.00 & 0.62 & 0.47 &   5.05 &     3.65 \\
& \{1,4\} &   1.00 &  3.12 & 1.61 & 1.00 & 0.76 &   7.50 &     0.05 \\
& \{1,5\} &   1.00 &  4.06 & 2.11 & 1.31 & 1.00 &   9.48 &    -5.70 \\
& \{2,3\} & -14.15 &  1.00 & 1.00 & 0.69 & 0.54 & -10.92 &  -224.84 \\
& \{2,4\} & -26.40 &  1.00 & 1.41 & 1.00 & 0.80 & -22.19 &  -745.85 \\
& \{2,5\} & -36.27 &  1.00 & 1.74 & 1.25 & 1.00 & -31.28 & -1384.77 \\
& \{3,4\} & -81.42 & -3.27 & 1.00 & 1.00 & 0.86 & -81.83 & -6807.00 \\
& \{3,5\} &-111.13 & -5.15 & 1.00 & 1.14 & 1.00 &-113.14 & -12605.62 \\ 
& \{4,5\} &-201.77 &-12.60 & 0.10 & 1.00 & 1.00 &-212.27 & -41297.69 \\[1.2ex]
      3 
& \{1,2,3\} &  1.00 &  1.00 &  1.00 & 0.71 & 0.57 &    4.28 &       4.73 \\
& \{1,2,4\} &  1.00 &  1.00 &  1.36 & 1.00 & 0.81 &    5.16 &       4.84 \\
& \{1,2,5\} &  1.00 &  1.00 &  1.64 & 1.23 & 1.00 &    5.88 &       4.53 \\
& \{1,3,4\} &  1.00 & -1.95 &  1.00 & 1.00 & 0.87 &    1.92 &      -3.73 \\
& \{1,3,5\} &  1.00 & -3.26 &  1.00 & 1.13 & 1.00 &    0.87 &     -13.13\\
& \{1,4,5\} &  1.00 & -8.41 &  0.22 & 1.00 & 1.00 &   -5.19 &     -84.13 \\
& \{2,3,4\} &185.97 &  1.00 &  1.00 & 1.00 & 0.89 &  189.85 &  -34208.14\\
& \{2,3,5\} &252.63 &  1.00 &  1.00 & 1.10 & 1.00 &  256.73 &  -63310.82\\
& \{2,4,5\} &456.04 &  1.00 &  0.48 & 1.00 & 1.00 &  459.52 & -207058.06 \\
& \{3,4,5\} &1369.96&  19.9 &  1.00 & 1.00 & 1.00 & 1392.86 & $-1.87\times 10^6$\\[1.2ex]
      4 
& \{1,2,3,4\} & 1.00 & 1.00 & 1.00 & 1.00 & 0.89 &     4.89 &    4.99 \\
& \{1,2,3,5\} & 1.00 & 1.00 & 1.00 & 1.10 & 1.00 &     5.10 &    4.99\\
& \{1,2,4,5\} & 1.00 & 1.00 & 0.55 & 1.00 & 1.00 &     4.55 &    4.79 \\
& \{1,3,4,5\} & 1.00 &14.07 & 1.00 & 1.00 & 1.00 &    18.07 & -165.86\\
& \{2,3,4,5\} &-3071.07&1.00& 1.00 & 1.00 & 1.00 & -3067.07 & $-9.44\times 10^6$ \\[.5ex]
      \hline
  \end{tabular}

\end{table}
Shrinkages for each corner point $\omega_{(\tau)}$ and for $n=2,3,4$
are shown in Table~\ref{tab:DoF}. That table reports also the value of
the statistics $\widehat\gdof$ and $\widehat\gdof_{DP}$ wich will be
introduced in the next section.

Here, for $n=3$ and $\tau=\{1,4,5\}$ a consistent negative expansion
($\omega_2 = -8.41$) occurs also in a situation different from the
ones suggested in the sufficient condition of Lemma
\ref{thm:extremeShrink}.

\end{example}

Recall that, by Theorem~\ref{thm:omegaAvg} the shrinkage vector
$\omega$ is a weighted average of the $\omega_{(\tau)}$ with weights
being function of the observation vector $y$. Therefore, the large
values of the shrinkages that arise at some of the points
$\omega_{(\tau)}$ may have a consistent effect even when the actual
$\omega$ is not to much near to these corner points.

\subsection{On the Degrees of Freedom of the PLS estimator}

Shrinkages represent natural tools for the study of the prediction
properties of the PLS estimator. Indeed, the sensitivities of the
prediction vector $\hat{y}$ with respect to changes on the actual
observations are given by the Jacobian
\begin{align}\label{eq:jac}
  J = \frac{\partial \hat{y}}{\partial y^T}
  &=
    (I - 2P)\Omega + 2P.
\end{align}
As a measure of that sensitivity, the Generalised DoF (GDoF) has been
introduced in \cite{Efron:2004,Ye:93} and is given by
$\gdof = \E[\tr(J)].$
The GDoF represents an extension of DoF concept to non linear
estimators. The use of that measure for PLS regressions was advocated
in \cite{KramerSugiyama:2011} where the authors propose its sample
counterpart as an unbiased estimator for $\gdof$. The need for an
estimator for the DoF arise, for instance, in the estimation of the
disturbance or prediction error variances or for determining the
number of directions to be used by PLS. The alternative measure
$\gdof_{DP} = \E[\tr(2J - J^TJ)]$, for the DoF of PLS have been
proposed in \cite{Denham:97,Phatak:2002}, while other measures based
on cross validation have also been considered (see for instance
\cite{VanDerVoet:1999}).

Consider now, the behaviour of this DoF estimator in the situations
identified in the previous subsection. Assume that $y$ has cardinality
$n$, that is $y = S_\tau y_\tau$ for an appropriate $\tau$. Under this
setup, the projection $P$ can be rewritten as $P = S_\tau S_\tau^T$
and the estimators for the degree of freedom become
\begin{align*}
  \widehat\gdof
  &= 
    \tr\Big(\Omega + 2 S_\tau S_\tau^T  (I-\Omega) \Big)
    = 
    n + \sum_{i \not\in \tau} \omega_i,
    \intertext{and}
  \widehat{GDoF}_{DP}
  &=
    \tr\Big( I - (I-\Omega)^2 \Big)
    = 
    m - \sum_{i \not\in \tau} (1-\omega_i)^2.
\end{align*}
Then, if $n$ is even and the assumptions of Lemma
\ref{thm:extremeShrink} are satisfied with $c>2$, then
$\widehat\gdof < n$.  Clearly, alternative values of $c$ could also
lead to a negative $\widehat\gdof$ or to a value exceeding the number
of observations.  Moreover, if the density of $y$ is concentrated
enough around $S_\tau \mu_\tau$, the same conclusions can be drawn for
the actual $\gdof$. Analogously, meaningless values
$\widehat\gdof_{DP}$ arise in proximity of ``degenerate'' models where
shrinkages can become very large in absolute value (see Table
\ref{tab:DoF} below).

Lemma~\ref{thm:extremeShrink} provides a sufficient condition that
contradicts a conjecture considered in \cite{KramerSugiyama:2011} and
``voiced'' in \cite{FrankFriedman:1993,MartensNaes:book} which states
that $\gdof > n$.  This property, was analytically proven to hold for
the first PLS step and empirically confirmed by means numerical
experiments for the remaining ones \cite{KramerSugiyama:2011}.
It is also worth noting that, negative values for $\widehat\gdof$ were
indeed observed in the experiment of Kramer and Sugiyama for large
model dimensions (see \cite{KramerSugiyama:2011} Section
4.3). Nonetheless, this behaviour was attributed to numerical
instabilities that characterises Krylov methods and which are likely
to occur for high-dimensional models.  Even if the numerical
instability of this class of methods is here recognised, we believe
that most the these negative DoFs are the effects a mechanism similar
to the one here discussed. Indeed, as $m$ increases it is more likely
to have $y$ orthogonal to a consistent set of principal axes, that is,
to observe a shrinkage vector located near a corner of the domain
region.

A clear example of this behaviour can be seen in Example
\ref{thm:exNegative}.  In that setup, computing the estimator for the
DoF when $y= (1,0,0,1,1)$ at the 3-rd iteration of the PLS gives an
estimate $\widehat\gdof = -5.18843$ (see Table~\ref{tab:DoF}). Looking
at Table~\ref{tab:DoF}, it is clear that things can go much worse,
this DoF measure can become extremely large and either positive or a
negative.

\begin{example}[continuation of Example~\ref{thm:exNegative}]
  The cases reported in Table~\ref{tab:DoF} are cases of exact
  under-specification. Indeed, when the cardinality of $y$ is equal to
  $n$, the PLS method has found the OLS estimator and thus there may
  be no reason to look at the sensitivity w.r.t. null elements of $y$.
  To analyse a less extreme situation, using the same model matrix,
  consider a setup where $y = \Lambda \beta$,
  with
  \begin{align*}
    \beta = \pmx{0.10 & 0.01 & 0.01 & 5.00 & 5.00 }.
  \end{align*}
  For that observation vector the estimates of the DoF for the 3rd
  step of PLS ($n=3$) is given by $\widehat\gdof = -3.134$. 

  Now, in order to consider a proper inferential setup, the above
  example is extended to the regression $y = \Lambda \beta + \eps$,
  where the additional disturbance vector is normally distributed:
  $\eps \sim N(0,\sigma^2 \Lambda)$, with $\sigma=0.02$.  Being unable
  to derive an explicit expression for the distribution of
  $\widehat\gdof$, a Monte Carlo (MC) experiment with 20000
  replications from the model has been performed. The resulting value
  for $\gdof = \E[\widehat\DoF]$ is -0.461 with an MC error having
  standard deviation of 0.026.  The result is less sharp than in the
  previous estimate, but still it is negative and consequently smaller
  than $n$.  The empirical distribution function of $\widehat\gdof$ is
  reported in Figure~\ref{fig:DoF} and MC estimation for the
  probability $\Prob[\widehat\gdof<0]$ is 0.56. Figure~\ref{fig:DoF}
  reports also the empirical distribution of $\widehat\gdof_{DP}$
  showing the pour performances of that estimator.

\begin{figure}[h]
  \centering

  \begin{tikzpicture}[scale=.7]
    \begin{axis}[
      ylabel={\footnotesize CDF},xlabel={\footnotesize $\widehat\gdof$},
      enlarge x limits=false,
      extra x ticks={0},
      extra y ticks={0},
      extra x tick style={grid=major}, 
      extra y tick style={grid=major}, 
      tick label style={font=\footnotesize,/pgf/number format/.cd,fixed,precision=3}
      ]
      \addplot[thick,blue!60!black]
      table {
  x     y
-5.2	0
-5.	0.0843
-4.8	0.1338
-4.6	0.1707
-4.4	0.1981
-4.2	0.2223
-4.	0.2475
-3.8	0.2689
-3.6	0.2882
-3.4	0.3076
-3.2	0.3252
-3.	0.3406
-2.8	0.3557
-2.6	0.3739
-2.4	0.3899
-2.2	0.4058
-2.	0.4208
-1.8	0.4355
-1.6	0.4495
-1.4	0.4637
-1.2	0.4783
-1.	0.4929
-0.8	0.5067
-0.6	0.5204
-0.4	0.5331
-0.2	0.5477
0.	0.5621
0.2	0.575
0.4	0.5894
0.6	0.6014
0.8	0.6148
1.	0.6291
1.2	0.6427
1.4	0.6565
1.6	0.6703
1.8	0.6859
2.	0.6988
2.2	0.71
2.4	0.722
2.6	0.7343
2.8	0.7477
3.	0.7592
3.2	0.7717
3.4	0.7854
3.6	0.7978
3.8	0.8112
4.	0.827
4.2	0.8395
4.4	0.8546
4.6	0.87
4.8	0.8848
5.	0.8993
5.2	0.9129
5.4	0.9255
5.6	0.9414
5.8	0.9562
6.	0.9695
6.2	0.9854
6.4	1
6.5	1
      };
    \end{axis}
  \end{tikzpicture}
  \qquad
  \begin{tikzpicture}[scale=.7]
    \begin{axis}[
      ylabel={\footnotesize CDF},xlabel={\footnotesize $\widehat\gdof_{DP}$},
      enlarge x limits=false,
      extra x ticks={0},
      extra y ticks={0},
      extra x tick style={grid=major}, 
      extra y tick style={grid=major}, 
      tick label style={font=\footnotesize,/pgf/number format/.cd,fixed,precision=3}
      ]
      \addplot[thick,blue!60!black]
      table {
  x     y
-85.	0
-84.	0.0105
-83.	0.06765
-82.	0.1106
-81.	0.14435
-80.	0.17325
-79.	0.197
-78.	0.22015
-77.	0.24025
-76.	0.2585
-75.	0.27535
-74.	0.2927
-73.	0.3085
-72.	0.32235
-71.	0.33735
-70.	0.3504
-69.	0.3643
-68.	0.37855
-67.	0.39245
-66.	0.4059
-65.	0.41875
-64.	0.43115
-63.	0.4434
-62.	0.4547
-61.	0.46615
-60.	0.4766
-59.	0.48855
-58.	0.49855
-57.	0.50875
-56.	0.5199
-55.	0.53035
-54.	0.5413
-53.	0.5512
-52.	0.56265
-51.	0.5718
-50.	0.5822
-49.	0.5928
-48.	0.6021
-47.	0.61145
-46.	0.62145
-45.	0.63115
-44.	0.64005
-43.	0.6489
-42.	0.6591
-41.	0.6692
-40.	0.6792
-39.	0.68925
-38.	0.6989
-37.	0.7089
-36.	0.71825
-35.	0.7259
-34.	0.73505
-33.	0.74365
-32.	0.75345
-31.	0.7624
-30.	0.7706
-29.	0.7795
-28.	0.7888
-27.	0.79785
-26.	0.80585
-25.	0.81505
-24.	0.82415
-23.	0.83325
-22.	0.84145
-21.	0.85095
-20.	0.85925
-19.	0.86885
-18.	0.87815
-17.	0.887
-16.	0.89635
-15.	0.9043
-14.	0.9122
-13.	0.91985
-12.	0.92625
-11.	0.9336
-10.	0.94195
-9.	0.94895
-8.	0.95605
-7.	0.96215
-6.	0.96705
-5.	0.97345
-4.	0.9786
-3.	0.9837
-2.	0.9887
-1.	0.99235
0.	0.9954
1.	0.99735
2.	0.9991
3.	0.99995
4.	1
      };
    \end{axis}
  \end{tikzpicture}
  \caption{\footnotesize CDF of the $\widehat\gdof$ and
    $\widehat\gdof_{DP}$ when $\lambda=(3.18,0.98,0.41,0.24,0.18)$,
    $\beta = (0.1,0.01,0.01,5,5)$, $\sigma=.02$ and $n=3$ and
    estimated by a MC experiment with $20\ 000$ replications. }
  \label{fig:DoF}
\end{figure}
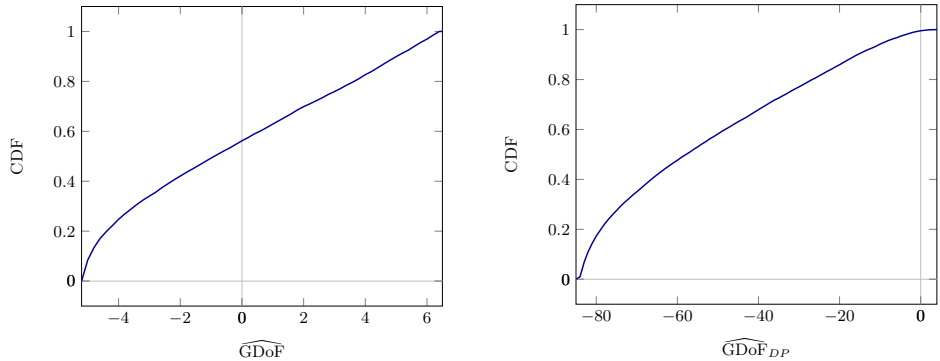  
\end{example}

\section{Conclusions}

A precise characterisation of the geometrical structure of PLS
shrinkages is here provided. The proposed analysis encompass and
complete a part of the literature on PLS regression
\cite{ButlerDenham:2000,FrankFriedman:1993,Goutis:96,Helland:1988,LingjaerdeChristophersen:2000,Kramer:2007}.
Here, the shrinkage vector is expressed as a weighted average of a set
of basic vectors that do not depend on the observations.  That
expression is a generalisation of the one considered in
\cite{ButlerDenham:2000} for a couple of special cases.  Also, this
analysis allowed to complete the one proposed in
\cite{LingjaerdeChristophersen:2000} where one extreme situation could
not be addressed. The explicit expression here proposed may represent
a starting point for the derivation of the distributions needed for
performing inference with PLS regression estimators. To this end, the
author remarks that the domain of the shrinkage vector variable has
been here formally and completely characterised. Moreover, the inverse
image of the shrinkage function is provided. This result provides a
starting step for the derivation of the distribution of the shrinkage
factors.

Furthermore, regions where the PLS regression estimator has an highly
non-linear behaviour and very large shrinkages (in absolute value)
have been characterised. In these situations, recently proposed
measures of the DoF for non-linear estimators completely fail as they
provide unrealistic results such as extremely large or negative
values.
The analytic tools here derived allowed to prove that the conjecture
stating that the PLS estimator always uses more ``DoF'' than the
number of PLS directions does not generally hold
\cite{FrankFriedman:1993,KramerSugiyama:2011,MartensNaes:book}. To
this end, a sufficient condition and some counterexamples have been
provided.
The failure of the ``GDoF'' statistic
\cite{Efron:2004,KramerSugiyama:2011,Ye:93} here identified points to
the need of a deeper reflection on the DoF concept especially in
highly non-linear contexts.

\bibliographystyle{imsart-number}

\begin{thebibliography}{26}

\bibitem{ButlerDenham:2000}
\begin{barticle}[author]
\bauthor{\bsnm{Butler},~\bfnm{Neil~A.}\binits{N.~A.}} \AND
  \bauthor{\bsnm{Denham},~\bfnm{Michael~C.}\binits{M.~C.}}
(\byear{2000}).
\btitle{The peculiar shrinkage properties of partial least squares regression}.
\bjournal{J. R. Stat. Soc. Ser. B Stat. Methodol.}
\bvolume{62}
\bpages{585--593}.
\bdoi{10.1111/1467-9868.00252}
\bmrnumber{1772417}
\end{barticle}
\endbibitem

\bibitem{ChunKeles:2010}
\begin{barticle}[author]
\bauthor{\bsnm{Chun},~\bfnm{Hyonho}\binits{H.}} \AND
  \bauthor{\bsnm{Kele{\c{s}}},~\bfnm{S{\"u}nd{\"u}z}\binits{S.}}
(\byear{2010}).
\btitle{Sparse partial least squares regression for simultaneous dimension
  reduction and variable selection}.
\bjournal{J. R. Stat. Soc. Ser. B Stat. Methodol.}
\bvolume{72}
\bpages{3--25}.
\bdoi{10.1111/j.1467-9868.2009.00723.x}
\bmrnumber{2751241}
\end{barticle}
\endbibitem

\bibitem{CookHellandSu:2013}
\begin{barticle}[author]
\bauthor{\bsnm{Cook},~\bfnm{R.~Dennis}\binits{R.~D.}},
  \bauthor{\bsnm{Helland},~\bfnm{Ilse~S.}\binits{I.~S.}} \AND
  \bauthor{\bsnm{Su},~\bfnm{Z.}\binits{Z.}}
(\byear{2013}).
\btitle{Envelopes and partial least squares regression}.
\bjournal{J. R. Stat. Soc. Ser. B. Stat. Methodol.}
\bvolume{75}
\bpages{851--877}.
\bdoi{10.1111/rssb.12018}
\bmrnumber{3124794}
\end{barticle}
\endbibitem

\bibitem{DelaigleHall:2012}
\begin{barticle}[author]
\bauthor{\bsnm{Delaigle},~\bfnm{Aurore}\binits{A.}} \AND
  \bauthor{\bsnm{Hall},~\bfnm{Peter}\binits{P.}}
(\byear{2012}).
\btitle{Methodology and theory for partial least squares applied to functional
  data}.
\bjournal{Ann. Statist.}
\bvolume{40}
\bpages{322--352}.
\bdoi{10.1214/11-AOS958}
\bmrnumber{3014309}
\end{barticle}
\endbibitem

\bibitem{Denham:97}
\begin{barticle}[author]
\bauthor{\bsnm{Denham},~\bfnm{Michael~C.}\binits{M.~C.}}
(\byear{1997}).
\btitle{Prediction intervals in partial least squares}.
\bjournal{J. Chemom.}
\bvolume{11}
\bpages{39--52}.
\bdoi{10.1002/(SICI)1099-128X(199701)11:1<39::AID-CEM433>3.0.CO;2-S}
\end{barticle}
\endbibitem

\bibitem{Druilhet2008}
\begin{barticle}[author]
\bauthor{\bsnm{Druilhet},~\bfnm{Pierre}\binits{P.}} \AND
  \bauthor{\bsnm{Mom},~\bfnm{Alain}\binits{A.}}
(\byear{2008}).
\btitle{Shrinkage structure in biased regression}.
\bjournal{J. Multivariate Anal.}
\bvolume{99}
\bpages{232--244}.
\bdoi{10.1016/j.jmva.2006.06.011}
\bmrnumber{2432327}
\end{barticle}
\endbibitem

\bibitem{Efron:2004}
\begin{barticle}[author]
\bauthor{\bsnm{Efron},~\bfnm{Bradley}\binits{B.}}
(\byear{2004}).
\btitle{The estimation of prediction error: covariance penalties and
  cross-validation}.
\bjournal{J. Amer. Statist. Assoc.}
\bvolume{99}
\bpages{619--642}.
\bnote{With comments and a rejoinder by the author}.
\bdoi{10.1198/016214504000000692}
\bmrnumber{2090899}
\end{barticle}
\endbibitem

\bibitem{Elden:2004}
\begin{barticle}[author]
\bauthor{\bsnm{Eld{\'e}n},~\bfnm{Lars}\binits{L.}}
(\byear{2004}).
\btitle{Partial least-squares vs.\ {L}anczos bidiagonalization. {I}. {A}nalysis
  of a projection method for multiple regression}.
\bjournal{Comput. Statist. Data Anal.}
\bvolume{46}
\bpages{11--31}.
\bdoi{10.1016/S0167-9473(03)00138-5}
\bmrnumber{2056822}
\end{barticle}
\endbibitem

\bibitem{FallatJohnson:2011}
\begin{bbook}[author]
\bauthor{\bsnm{Fallat},~\bfnm{Shaun~M.}\binits{S.~M.}} \AND
  \bauthor{\bsnm{Johnson},~\bfnm{Charles~R.}\binits{C.~R.}}
(\byear{2011}).
\btitle{Totally nonnegative matrices}.
\bseries{Princeton Series in Applied Mathematics}.
\bpublisher{Princeton University Press}.
\bmrnumber{2791531}
\end{bbook}
\endbibitem

\bibitem{FrankFriedman:1993}
\begin{barticle}[author]
\bauthor{\bsnm{Frank},~\bfnm{I.~E.}\binits{I.~E.}} \AND
  \bauthor{\bsnm{Friedman},~\bfnm{J.~H}\binits{J.~H.}}
(\byear{1993}).
\btitle{A statistical view of some chemometrics regression tools}.
\bjournal{Technometrics}
\bvolume{35}
\bpages{109--135}.
\bdoi{10.1080/00401706.1993.10485033}
\end{barticle}
\endbibitem

\bibitem{Goutis:96}
\begin{barticle}[author]
\bauthor{\bsnm{Goutis},~\bfnm{Constantinos}\binits{C.}}
(\byear{1996}).
\btitle{Partial least squares algorithm yields shrinkage estimators}.
\bjournal{Ann. Statist.}
\bvolume{24}
\bpages{816--824}.
\bdoi{10.1214/aos/1032894467}
\bmrnumber{1394990 (97d:62127)}
\end{barticle}
\endbibitem

\bibitem{Goutis:96b}
\begin{barticle}[author]
\bauthor{\bsnm{Goutis},~\bfnm{Constantinos}\binits{C.}} \AND
  \bauthor{\bsnm{Fearn},~\bfnm{Tom}\binits{T.}}
(\byear{1996}).
\btitle{Partial least squares regression on smooth factors}.
\bjournal{J. Amer. Statist. Assoc.}
\bvolume{91}
\bpages{627--632}.
\bdoi{10.2307/2291658}
\bmrnumber{1395730}
\end{barticle}
\endbibitem

\bibitem{Greenbaum:96}
\begin{barticle}[author]
\bauthor{\bsnm{Greenbaum},~\bfnm{Anne}\binits{A.}},
  \bauthor{\bsnm{Pt{\'a}k},~\bfnm{Vlastimil}\binits{V.}} \AND
  \bauthor{\bsnm{Strako{\v{s}}},~\bfnm{Zden{\v{e}}k}\binits{Z.}}
(\byear{1996}).
\btitle{Any nonincreasing convergence curve is possible for {GMRES}}.
\bjournal{SIAM J. Matrix Anal. Appl.}
\bvolume{17}
\bpages{465--469}.
\bdoi{10.1137/S0895479894275030}
\bmrnumber{1397238}
\end{barticle}
\endbibitem

\bibitem{Helland:1988}
\begin{barticle}[author]
\bauthor{\bsnm{Helland},~\bfnm{Inge~S.}\binits{I.~S.}}
(\byear{1988}).
\btitle{On the structure of partial least squares regression}.
\bjournal{Comm. Statist. Simulation Comput.}
\bvolume{17}
\bpages{581--607}.
\bdoi{10.1080/03610918808812681}
\bmrnumber{MR955342}
\end{barticle}
\endbibitem

\bibitem{Helland:1990}
\begin{barticle}[author]
\bauthor{\bsnm{Helland},~\bfnm{Inge~S.}\binits{I.~S.}}
(\byear{1990}).
\btitle{Partial least squares regression and statistical models}.
\bjournal{Scand. J. Statist.}
\bvolume{17}
\bpages{97--114}.
\bmrnumber{1085924}
\end{barticle}
\endbibitem

\bibitem{Kramer:2007}
\begin{barticle}[author]
\bauthor{\bsnm{Kr{\"a}mer},~\bfnm{Nicole}\binits{N.}}
(\byear{2007}).
\btitle{An overview on the shrinkage properties of partial least squares
  regression}.
\bjournal{Comput. Statist.}
\bvolume{22}
\bpages{249--273}.
\bdoi{10.1007/s00180-007-0038-z}
\bmrnumber{2318459}
\end{barticle}
\endbibitem

\bibitem{KramerSugiyama:2011}
\begin{barticle}[author]
\bauthor{\bsnm{Kr{\"a}mer},~\bfnm{Nicole}\binits{N.}} \AND
  \bauthor{\bsnm{Sugiyama},~\bfnm{Masashi}\binits{M.}}
(\byear{2011}).
\btitle{The degrees of freedom of partial least squares regression}.
\bjournal{J. Amer. Statist. Assoc.}
\bvolume{106}
\bpages{697--705}.
\bdoi{10.1198/jasa.2011.tm10107}
\bmrnumber{2847952}
\end{barticle}
\endbibitem

\bibitem{LiesenStrakos:book}
\begin{bbook}[author]
\bauthor{\bsnm{Liesen},~\bfnm{J{\"o}rg}\binits{J.}} \AND
  \bauthor{\bsnm{Strako{\v{s}}},~\bfnm{Zden{\v{e}}k}\binits{Z.}}
(\byear{2013}).
\btitle{Krylov subspace methods}.
\bseries{Numerical Mathematics and Scientific Computation}.
\bpublisher{Oxford University Press, Oxford}
\bnote{Principles and analysis}.
\bmrnumber{3024841}
\end{bbook}
\endbibitem

\bibitem{LingjaerdeChristophersen:2000}
\begin{barticle}[author]
\bauthor{\bsnm{Lingj{\ae}rde},~\bfnm{Ole~C.}\binits{O.~C.}} \AND
  \bauthor{\bsnm{Christophersen},~\bfnm{Nils}\binits{N.}}
(\byear{2000}).
\btitle{Shrinkage structure of partial least squares}.
\bjournal{Scand. J. Statist.}
\bvolume{27}
\bpages{459--473}.
\bdoi{10.1111/1467-9469.00201}
\bmrnumber{1795775}
\end{barticle}
\endbibitem

\bibitem{MartensNaes:book}
\begin{bbook}[author]
\bauthor{\bsnm{Martens},~\bfnm{Harald}\binits{H.}} \AND
  \bauthor{\bsnm{Naes},~\bfnm{Tormod}\binits{T.}}
(\byear{1989}).
\btitle{Multivariate Calibration}.
\bpublisher{Wiley, New York}.
\end{bbook}
\endbibitem

\bibitem{NaikTsai:2000}
\begin{barticle}[author]
\bauthor{\bsnm{Naik},~\bfnm{Prasad}\binits{P.}} \AND
  \bauthor{\bsnm{Tsai},~\bfnm{Chih-Ling}\binits{C.-L.}}
(\byear{2000}).
\btitle{Partial least squares estimator for single-index models}.
\bjournal{J. R. Stat. Soc. Ser. B Stat. Methodol.}
\bvolume{62}
\bpages{763--771}.
\bdoi{10.1111/1467-9868.00262}
\bmrnumber{1796290}
\end{barticle}
\endbibitem

\bibitem{PhatakDeHoog:2002}
\begin{barticle}[author]
\bauthor{\bsnm{Phatak},~\bfnm{Aloke}\binits{A.}} \AND \bauthor{\bparticle{de}
  \bsnm{Hoog},~\bfnm{Frank}\binits{F.}}
(\byear{2002}).
\btitle{Exploiting the connection between {PLS}, {Lanczos} methods and
  conjugate gradients: alternative proofs of some properties of {PLS}}.
\bjournal{J. Chemom.}
\bvolume{16}
\bpages{361--367}.
\bdoi{10.1002/cem.728}
\end{barticle}
\endbibitem

\bibitem{Phatak:2002}
\begin{barticle}[author]
\bauthor{\bsnm{Phatak},~\bfnm{A.}\binits{A.}},
  \bauthor{\bsnm{Reilly},~\bfnm{P.~M.}\binits{P.~M.}} \AND
  \bauthor{\bsnm{Penlidis},~\bfnm{A.}\binits{A.}}
(\byear{2002}).
\btitle{The asymptotic variance of the univariate {PLS} estimator}.
\bjournal{Linear Algebra Appl.}
\bvolume{354}
\bpages{245--253}.
\bnote{Ninth special issue on linear algebra and statistics}.
\bdoi{10.1016/S0024-3795(01)00357-3}
\bmrnumber{1927660}
\end{barticle}
\endbibitem

\bibitem{ReissOgden:2007}
\begin{barticle}[author]
\bauthor{\bsnm{Reiss},~\bfnm{Philip~T.}\binits{P.~T.}} \AND
  \bauthor{\bsnm{Ogden},~\bfnm{R.~Todd}\binits{R.~T.}}
(\byear{2007}).
\btitle{Functional principal component regression and functional partial least
  squares}.
\bjournal{J. Amer. Statist. Assoc.}
\bvolume{102}
\bpages{984--996}.
\bdoi{10.1198/016214507000000527}
\bmrnumber{2411660}
\end{barticle}
\endbibitem

\bibitem{VanDerVoet:1999}
\begin{barticle}[author]
\bauthor{\bsnm{Van Der~Voet},~\bfnm{Hilko}\binits{H.}}
(\byear{1999}).
\btitle{Pseudo-degrees of freedom for complex predictive models: The example of
  partial least squares}.
\bjournal{J. Chemom.}
\bvolume{13}
\bpages{195-208}.
\end{barticle}
\endbibitem

\bibitem{Ye:93}
\begin{barticle}[author]
\bauthor{\bsnm{Ye},~\bfnm{Jianming}\binits{J.}}
(\byear{1998}).
\btitle{On measuring and correcting the effects of data mining and model
  selection}.
\bjournal{J. Amer. Statist. Assoc.}
\bvolume{93}
\bpages{120--131}.
\bdoi{10.2307/2669609}
\bmrnumber{1614596}
\end{barticle}
\endbibitem

\end{thebibliography}

\end{document}